%
%
%

\documentclass[twoside,12pt]{article}
\textwidth 16,5cm \textheight 23cm \topmargin -1cm \oddsidemargin 0.5cm \evensidemargin -0.5cm

\usepackage{amsbsy,latexsym,amsopn,amstext,amsxtra,euscript,amscd}
\usepackage{amssymb,amsfonts,amsmath,amsthm}
\usepackage{graphics,graphicx}

\newcommand*{\N}{4}
\newcommand*{\K}{6}
\def\BT{${\cal BIT}$}
\DeclareMathOperator{\sign}{sign}

\usepackage{tikz}
\usetikzlibrary{arrows,chains,matrix,positioning,scopes}
\usetikzlibrary{positioning,shadows,backgrounds}  
\tikzset{>=stealth',every on chain/.append style={join},
         every join/.style={->}}
\tikzstyle{labeled}=[execute at begin node=$\scriptstyle,
   execute at end node=$]

\theoremstyle{plain}
\newtheorem{theorem}{Theorem}
\newtheorem{corollary}[theorem]{Corollary}
\newtheorem{lemma}[theorem]{Lemma}

\theoremstyle{definition}

\theoremstyle{remark}
\newtheorem{remark}[theorem]{Remark}

\pagestyle{myheadings}
\markboth{{L. N\'emeth}}
{On the Binomial Interpolated Triangles}

\title{\bf On the binomial interpolated triangles}

\author{L\'aszl\'o N\'emeth\footnote{University of Sopron, Institute of Mathematics, Hungary. \textit{nemeth.laszlo@uni-sopron.hu}} }
\date{}


\begin{document}

\maketitle \thispagestyle{empty}

\begin{abstract}
The binomial interpolated transform of a sequence is a generalization of the well-known binomial transform. We examine a Pascal-like triangle, on which a binomial interpolated transform works between the left and right diagonals, focusing on binary recurrences. We give the sums of the elements in rows and in rising diagonals, further we define two special classes of these arithmetical triangles.  \\[1mm]     
{\em Key Words: binomial operator, binomial transform, binomial interpolated transform, recurrence sequence.}\\
{\em MSC code:  11B37, 11B39, 11B65, 11B75.} \\[1mm] 
The final publication is available at \textit{Journal of Integer Sequences}, Vol. 20, Article 17.7.8 via 
https://cs.uwaterloo.ca/journals/JIS/vol20.html.
\end{abstract}


\section{Introduction}

Let us define the sequence $b=\{b_n\}_{n=0}^{\infty}\in\mathbb{R}^{\infty}$ as the binomial transform of the given sequence $a=\{a_n\}_{n=0}^{\infty}\in\mathbb{R}^{\infty}$ by $b_n=\sum_{i=0}^{n}\binom{n}{i} a_i$. This transformation is invertible with formula $a_n=\sum_{i=0}^{n}\binom{n}{i} (-1)^{n-i} b_i$. Several researchers  \cite{Barbero,Bhadouria,Falcon,Pan,Spivey} examined the properties and the generalizations of the binomial transformation. 
One of its generalizations is the so-called binomial interpolated transform \cite{Barbero} given by 
\begin{equation}\label{eq:interpolated_trans}
b_n=\sum_{i=0}^{n}\binom{n}{i} u^iv^{n-i}a_i
\end{equation}
for any non-zero $u,v\in \mathbb{R}$. Bhadouria at al.\ \cite{Bhadouria}  and Falcon and Plaza \cite{Falcon} showed for some (falling and rising) $k$-binomial transform cases, when  $u=1$, $v=k$; $u=k$, $v=1$  or $u=v=k$,  that there are special infinite (Pascal-like) triangles, whose left diagonal (left leg) contains the terms of $a$ and the right one (right leg)  the terms of $b$. 

In our paper, we examine the inner part of this type of triangle in a generalized form focusing exclusively on the binary recurrence sequences $a$. We determine the sums and alternating sums of rows, rising diagonals and central term sequence and give explicit forms for entries of the triangle. Finally, we define and examine two special classes of our arithmetical triangles.

\section{Binomial interpolated triangle}
 
Let an arithmetical triangle (called ``derangement triangle" \cite{Bhadouria}) be defined  by terms $a_{n,k}$ ($k$-th entry in row $n$ -- see Figure~\ref{fig:construction_}) from the sequence $\{a_{n,0}\}_{n=0}^{\infty}$, where
\begin{equation}\label{eq:def_triangle_rule}
 a_{n,k} = u a_{{n},{k-1}}+v a_{{n-1},{k-1}} \quad  (1\leq k\leq n),
\end{equation}  $ u, v \in \mathbb{R}$ and $u v\ne0$. 

We shall show that the right diagonal sequence is a binomial interpolated transform of the left diagonal sequence with parameters $u$ and $v$. Moreover, we shall prove that the converse also holds, with the parameters $-v/u$ and $1/u$.

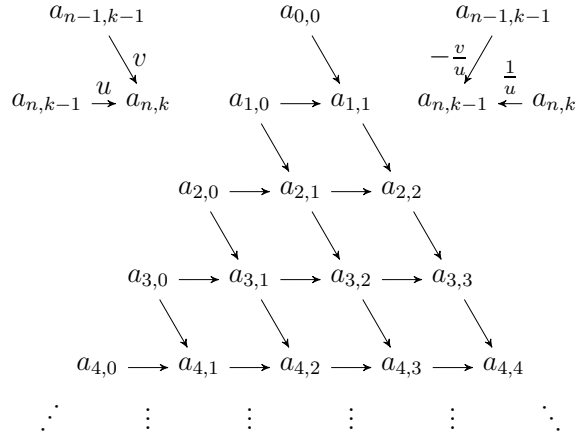
\begin{figure}[!thbp]
	\centering 
	\renewcommand*{\N}{4}
	\renewcommand*{\K}{4}
	
	\scalebox{0.90}{ 		         
		\begin{tikzpicture}[->,xscale=1.5,yscale=1.3, auto,swap]
		\foreach \i in {0,...,\N}
		\foreach \j in {0,...,\i} 
		{\node(a\i \j)  at ({-\i/2 + \j},{-\i}) {$a_{{\i},{\j}}$};}   

		\pgfmathtruncatemacro{\i}{\N +1 };
		\node[rotate=25](a\i 0)  at ({-\i/2 },{0.5-\i}) {\reflectbox{$\ddots$}};
		\foreach \j in {1,...,{\numexpr\N }}
		{
			\node(a\i \j)  at ({-\i/2 +\j},{0.5-\i}) {$\vdots$};
		}
		\pgfmathtruncatemacro{\j}{(\N + 1) }
		\node[rotate=-25] (a\i \j)  at ({-\i/2 +\j},{0.5-\i}) {$\ddots$};	

		\foreach \i in {1,...,\N}
		\foreach \j in {1,...,\i}
		{  \pgfmathtruncatemacro{\x}{(\i -1) };
			\pgfmathtruncatemacro{\y}{\j -1 };
			\path[every node/.style={scale=.7},midway]  (a\i \y) edge node [above]{} (a\i \j) ; 
			\path[every node/.style={scale=.7},midway] (a\x \y) edge node [above]{} (a\i \j) ;
		}		

		\node (c0) at (-2,-0)   {{$a_{{n-1},{k-1}}$}}; 
		\node (c1) at (-2.5,-1)   {{$a_{{n},{k-1}}$}}; 
		\node (c2) at (-1.5,-1)   {{$a_{n,k} $}};
		\path (c0) edge node [right]{$v$} (c2);
		\path (c1) edge node [above]{$u$} (c2);

		\node (c0) at (2,-0)   {{$a_{{n-1},{k-1}}$}}; 
		\node (c1) at (2.5,-1)   {{$a_{{n},{k}}$}}; 
		\node (c2) at (1.5,-1)   {{$a_{n,{k-1}} $}};
		\path (c0) edge node [left]{$-\frac{v}{u}$} (c2);
		\path (c1) edge node [above]{$\frac1u$} (c2);
		\end{tikzpicture}}
	\caption{Binomial interpolated triangle}
	\label{fig:construction_}
\end{figure}

\begin{theorem}\label{th:a_nk}
For all integer $r$, $k$ such that $k_0\leq k\leq n$, where $k_0\geq0$ is a fixed integer, we have
\begin{equation}\label{eq:interpolated_transf_gen}
a_{n,k}=\sum_{i=0}^{k-k_0}\binom{k-k_0}{i} u^{i}v^{k-k_0-i}a_{{n-k+k_0+i},{k_0}}.
\end{equation} 
\end{theorem}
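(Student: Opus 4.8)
The plan is to prove \eqref{eq:interpolated_transf_gen} by induction on the column difference $m = k - k_0$, keeping $k_0$ fixed and letting the statement range over all admissible $n$ simultaneously. The base case $m = 0$ (that is, $k = k_0$) is immediate: the sum collapses to its single $i = 0$ term, $\binom{0}{0} u^0 v^0 a_{n,k_0} = a_{n,k_0}$, which is a tautology. This uniform-in-$n$ formulation is what makes the induction go through, because the defining recurrence \eqref{eq:def_triangle_rule} expresses an entry in column $k$ through two entries in column $k-1$, but sitting in different rows.

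For the inductive step, suppose \eqref{eq:interpolated_transf_gen} holds with $k$ replaced by $k-1$ for every valid $n$. First I would apply the defining rule $a_{n,k} = u\,a_{n,k-1} + v\,a_{n-1,k-1}$ and substitute the inductive hypothesis into each of the two summands. Writing $m = k - k_0$, the term $a_{n,k-1}$ becomes a sum over $i = 0,\dots,m-1$ of $\binom{m-1}{i} u^i v^{m-1-i} a_{n-k+k_0+1+i,\,k_0}$, while $a_{n-1,k-1}$ becomes the same binomial sum but with row index $a_{n-k+k_0+i,\,k_0}$, one lower. The key observation is that multiplying the first sum by $u$ and shifting its summation index by one (set $j = i+1$) aligns its row indices with those coming from the $v$-weighted second sum.

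After this reindexing both sums run over terms of the form $u^j v^{m-j} a_{n-k+k_0+j,\,k_0}$, and collecting coefficients I would add $\binom{m-1}{j-1}$ (from the $u$-part) to $\binom{m-1}{j}$ (from the $v$-part). By Pascal's rule these combine to $\binom{m}{j}$ for $1 \le j \le m-1$, while the endpoints $j = 0$ and $j = m$ each receive a single contribution, equal to $\binom{m}{0} = 1$ and $\binom{m}{m} = 1$ respectively; this yields exactly \eqref{eq:interpolated_transf_gen} with parameter $m = k - k_0$. I expect the only genuine obstacle to be the index bookkeeping: correctly tracking how the row index $n - k + k_0 + i$ shifts under the recurrence and ensuring the two binomial families are aligned before invoking Pascal's identity. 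One should also note that the hypothesis $k \le n$ guarantees that every referenced entry $a_{n-k+k_0+i,\,k_0}$ actually lies in the triangle (its row index is at least $k_0$), so no term of the sum falls outside the domain of definition.
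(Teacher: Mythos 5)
Your proposal is correct and follows essentially the same route as the paper's own proof: induction on $k-k_0$ (the paper's $\overline{k}$, your $m$), substituting the inductive hypothesis into the defining recurrence $a_{n,k}=u\,a_{n,k-1}+v\,a_{n-1,k-1}$, shifting the index of the $u$-weighted sum, and combining coefficients via Pascal's rule with the two endpoint terms handled separately. Your explicit remark that the hypothesis is taken uniformly over all admissible $n$ at each column level, and the check that $k\leq n$ keeps every referenced entry inside the triangle, only make precise what the paper leaves implicit.
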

\begin{proof}
We prove it by induction on $n$ and $k$. If $k={k_0}$, then the formula \eqref{eq:interpolated_transf_gen} trivially holds for any $n$. We suppose that \eqref{eq:interpolated_transf_gen} is true up to $k_0\leq k-1$. Let $1\leq\overline{k}=k-k_0$. Then

\begin{eqnarray*}
 a_{n,k}&=& u a_{{n},{k-1}}+v a_{{n-1},{k-1}} \\
 &=& u \sum_{i=0}^{\overline{k}-1}\binom{\overline{k}-1}{i} u^{i}v^{\overline{k}-i-1}a_{{n-\overline{k}+i+1},{k_0}} + v \sum_{i=0}^{\overline{k}-1}\binom{\overline{k}-1}{i}u^{i}v^{\overline{k}-i-1}a_{{n-\overline{k}+i},{k_0}} \\
 &=& \sum_{i=0}^{\overline{k}-1} \binom{\overline{k}-1}{i} u^{i+1}v^{\overline{k}-i-1}a_{{n-\overline{k}+i+1},{k_0}} + \sum_{i=0}^{\overline{k}-1} \binom{\overline{k}-1}{i} u^{i}v^{\overline{k}-i}a_{{n-\overline{k}+i},{k_0}} \\
 &=& \sum_{i=1}^{\overline{k}} \binom{\overline{k}-1}{i-1} u^{i}v^{\overline{k}-i}a_{{n-\overline{k}+i},{k_0}} + \sum_{i=0}^{\overline{k}-1} \binom{\overline{k}-1}{i} u^{i}v^{\overline{k}-i}a_{{n-\overline{k}+i},{k_0}} \\ 
 &=&u^{\overline{k}}a_{{n},{k_0}}+  \sum_{i=1}^{\overline{k}-1} \binom{\overline{k}-1}{i-1} u^{i}v^{\overline{k}-i}a_{{n-\overline{k}+i},{k_0}}  + 
 v^{\overline{k}}a_{{n-\overline{k}},{k_0}}+ \sum_{i=1}^{\overline{k}-1} \binom{\overline{k}-1}{i} u^{i}v^{\overline{k}-i}a_{{n-\overline{k}+i},{k_0}} \\
 &=&v^{\overline{k}}a_{{n-\overline{k}},{k_0}} + 
  \sum_{i=1}^{\overline{k}-1}\left( \binom{\overline{k}-1}{i-1}+ \binom{\overline{k}-1}{i} \right) u^{i}v^{\overline{k}-i}a_{{n-\overline{k}+i},{k_0}} 
+ u^{\overline{k}}a_{{n},{k_0}}\\  
 &=& \sum_{i=0}^{\overline{k}}\binom{\overline{k}}{i} u^{i}v^{\overline{k}-i}a_{{n-\overline{k}+i},{k_0}}.
\end{eqnarray*} 
\end{proof}

Considering the substitutions $k_0=0$ and $k=n$, or considering a fixed term $a_{{n_0},{k_0}}$ leads us to the following corollary.
\begin{corollary}
The right diagonal sequence is the binomial interpolated transform of the left diagonal sequence, so
\begin{equation}\label{eq:interpolated_transf}
b_n=a_{n,n}=\sum_{i=0}^{n}\binom{n}{i} u^iv^{n-i}a_{i,0}.
\end{equation}
Furthermore, let us fix $k_0$ and $n_0$, so that $0\leq k_0\leq n_0$. Then the terms $\overline{a}_{i,j}=a_{n,k}$ form a binomial interpolated sub-triangle (see Figure~\ref{fig:sub_triangle}), where  $i=n-n_0$, $j=k-k_0$ $n_0\leq n$, $k_0\leq k \leq n-(n_0-k_0)$. The sub-triangle's right diagonal sequence is the binomial interpolated transform of its left diagonal sequence.   
\end{corollary}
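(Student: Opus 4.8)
The plan is to obtain both claims as immediate specializations of Theorem~\ref{th:a_nk}, since that theorem already expresses every interior entry through a fixed diagonal. For the first claim I would put $k_0=0$ and $k=n$ in \eqref{eq:interpolated_transf_gen}: then $k-k_0=n$, the $v$-power becomes $v^{n-i}$, and the first subscript $n-k+k_0+i$ collapses to $i$, so the formula reads $b_n=a_{n,n}=\sum_{i=0}^{n}\binom{n}{i}u^iv^{n-i}a_{i,0}$, which is precisely \eqref{eq:interpolated_transf}. Nothing is required here beyond checking these three index simplifications.

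For the sub-triangle claim I would fix $k_0,n_0$ with $0\le k_0\le n_0$ and introduce the reindexed array $\overline a_{i,j}=a_{n,k}$, where $n=n_0+i$ and $k=k_0+j$. First I would check admissibility: as $(i,j)$ runs over $0\le j\le i$ the pair $(n,k)$ stays in the stated window $n_0\le n$, $k_0\le k\le n-(n_0-k_0)$, which is immediate from $0\le j\le i$. Next I would verify closure, namely that $\overline a$ satisfies the original construction rule: pushing the shift $n\mapsto n_0+i$, $k\mapsto k_0+j$ through \eqref{eq:def_triangle_rule} turns $\overline a_{i,j}=u\,\overline a_{i,j-1}+v\,\overline a_{i-1,j-1}$ into $a_{n,k}=u a_{n,k-1}+v a_{n-1,k-1}$, which is valid for all $1\le j$ because then $1\le k\le n$. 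Hence $\overline a$ is itself a binomial interpolated triangle with the same parameters $u,v$ and left diagonal $\overline a_{i,0}=a_{n_0+i,k_0}$.

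The transform statement then follows by applying Theorem~\ref{th:a_nk} to the diagonal entry $\overline a_{i,i}=a_{n_0+i,\,k_0+i}$, keeping the fixed integer $k_0$: here $k-k_0=i$, the $v$-exponent is $i-m$, and the subscript reduces to $n_0+m$, whence $\overline a_{i,i}=\sum_{m=0}^{i}\binom{i}{m}u^mv^{i-m}a_{n_0+m,k_0}=\sum_{m=0}^{i}\binom{i}{m}u^mv^{i-m}\overline a_{m,0}$, exactly the asserted binomial interpolated transform of the sub-triangle's left diagonal. The only obstacle I anticipate is clerical rather than conceptual: the whole argument hinges on making the three index substitutions collapse correctly under the shift and on confirming that $0\le j\le i$ maps onto the advertised window for $(n,k)$. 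Once these translations are carried out carefully, no computation beyond Theorem~\ref{th:a_nk} remains.
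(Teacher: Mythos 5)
Your proposal is correct and matches the paper's approach exactly: the paper derives the corollary as an immediate specialization of Theorem~\ref{th:a_nk}, substituting $k_0=0$, $k=n$ for the first claim and fixing $k_0$, $n_0$ for the sub-triangle claim. Your additional verifications (that the window $0\le j\le i$ maps onto the stated range for $(n,k)$ and that the shifted array satisfies rule \eqref{eq:def_triangle_rule}) are just a more explicit rendering of what the paper leaves implicit.
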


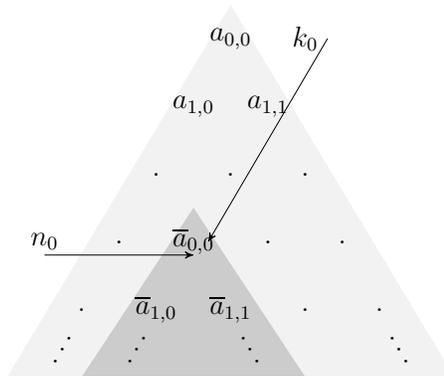
\begin{figure}
		\centering 
\newcommand*{\n}{1}
\newcommand*{\m}{2}
\renewcommand*{\N}{4}
\scalebox{0.90}{ 		         
	\begin{tikzpicture}[->,xscale=1.1,yscale=1.0, auto,swap]

	\node (r0) at ( 0,  0.5) {}; 
	\node (s0) at (-3, -5.0) {}; 
	\node (s1) at ( 3, -5.0) {}; 
	\node (si) at (-1.0, -4.0) {}; 
	\fill[fill=gray!10] (r0.center)--(s0.center)--(s1.center);
	\path[] (r0)--(s0);
	\path[] (s0)--(si);
	\path[] (si)--(s1);
	\path[] (s1)--(r0);	
	
	\foreach \i in {0,...,\n}
	\foreach \j in {0,...,\i} 
	{\node(a\i \j)  at ({-\i/2 + \j},{-\i}) {$a_{{\i},{\j}}$};}
	\foreach \i in {\m,...,\N}
	\foreach \j in {0,...,\i} 
	{\node(a\i \j)  at ({-\i/2 + \j},{-\i}) {$.$};}	

	\node (r0) at ( -0.5,  -2.5) {}; 
	\node (s0) at (-2.0, -5.0) {}; 
	\node (s1) at ( 1.0, -5.0) {}; 
	\node (si) at (-1.0, -4.0) {}; 
	\fill[fill=gray!40] (r0.center)--(s0.center)--(s1.center);
	\path[] (r0)--(s0);
	\path[] (s0)--(si);
	\path[] (si)--(s1);
	\path[] (s1)--(r0);		

    \node (r) at ( -2.5,  -3) {$n_0$};	
    \node (d) at ( 1,  0) {$k_0$}; 
    \path[draw]  ( -2.5, -3.2) -- (-0.5,-3.2) ;
    \path[draw]  ( 1.3, 0) -- (-0.3,-3) ;
	\foreach \i in {0,...,1}
	\foreach \j in {0,...,\i} 
	{\node(a\i \j)  at ({-\i/2 + \j-0.5},{-\i-3}) {$\overline{a}_{{\i},{\j}}$};}
	
	\pgfmathtruncatemacro{\i}{\N +1 };
	\node[rotate=25](a\i 0)  at ({-\i/2+0.2 },{0.5-\i}) {\reflectbox{$\ddots$}};
	\node[rotate=25](a\i 1)  at ({-\i/2+1.2 },{0.5-\i}) {\reflectbox{$\ddots$}};
	\pgfmathtruncatemacro{\j}{(\N + 1) }
	\node[rotate=-25] (a\i \j)  at ({-\i/2 +\j-0.2},{0.5-\i}) {$\ddots$};	
	\node[rotate=-25] (a\i \j)  at ({-\i/2 +\j-2.2},{0.5-\i}) {$\ddots$};

	\end{tikzpicture}}
\caption{Sub-triangle of the binomial interpolated triangle}
\label{fig:sub_triangle}
\end{figure}

We now express the terms $a_{n,k}$ by the right diagonal sequence $\{a_{n,n}\}_{n=0}^{\infty}$.
\begin{theorem}\label{th:a_nk_nn}
	For any $0\leq k\leq n$
	\begin{equation}\label{eq:inverse_interpolated_transf_gen}
	a_n^k=\sum_{i=0}^{n-k}\binom{n-k}{i}\left(\frac1u \right) ^{i}\left(-\frac{v}{u} \right) ^{n-k-i}a_{k+i}^{k+i}.
	\end{equation} 
\end{theorem}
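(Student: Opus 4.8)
The plan is to reduce this statement to Theorem~\ref{th:a_nk} by exploiting the symmetry of the triangle under reflection about its vertical axis, rather than running a fresh double induction. Concretely, I would introduce the reflected array $\tilde a_{n,j}:=a_{n,n-j}$ for $0\le j\le n$, so that the left diagonal of $\tilde a$ (the terms $\tilde a_{m,0}=a_{m,m}$) is precisely the right diagonal of the original triangle, and conversely its right diagonal is the original left diagonal. The whole point is that $\tilde a$ satisfies a recurrence of exactly the same \emph{shape} as \eqref{eq:def_triangle_rule}, only with different parameters; once that is established, Theorem~\ref{th:a_nk} applies verbatim to $\tilde a$.

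First I would verify the recurrence for $\tilde a$. Substituting $k=n-j+1$ into \eqref{eq:def_triangle_rule} and translating each of the three entries through the definition of $\tilde a$ (namely $a_{n,n-j+1}=\tilde a_{n,j-1}$, $a_{n,n-j}=\tilde a_{n,j}$, and $a_{n-1,n-j}=\tilde a_{n-1,j-1}$) gives $\tilde a_{n,j-1}=u\,\tilde a_{n,j}+v\,\tilde a_{n-1,j-1}$, which upon solving for $\tilde a_{n,j}$ becomes
\begin{equation*}
\tilde a_{n,j}=\tfrac1u\,\tilde a_{n,j-1}-\tfrac vu\,\tilde a_{n-1,j-1}\qquad(1\le j\le n).
\end{equation*}
This is exactly rule \eqref{eq:def_triangle_rule} for the array $\tilde a$ with parameters $\bar u=1/u$ and $\bar v=-v/u$; note $\bar u\bar v=-v/u^2\ne0$ because $uv\ne0$, so the hypotheses of Theorem~\ref{th:a_nk} are met and $\tilde a$ is genuinely the triangle generated by that rule from its left column $\{a_{m,m}\}$.

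Next I would apply Theorem~\ref{th:a_nk} to $\tilde a$ with $k_0=0$, obtaining
\begin{equation*}
\tilde a_{n,j}=\sum_{i=0}^{j}\binom{j}{i}\bar u^{\,i}\bar v^{\,j-i}\,\tilde a_{n-j+i,0}.
\end{equation*}
Finally, setting $j=n-k$ and unwinding the definition of $\tilde a$ — using $\tilde a_{n,n-k}=a_{n,k}=a_n^k$ and $\tilde a_{n-j+i,0}=a_{n-j+i,\,n-j+i}=a_{k+i,k+i}=a_{k+i}^{k+i}$ — turns this into precisely \eqref{eq:inverse_interpolated_transf_gen}, and also explains why the parameters $-v/u$ and $1/u$ announced before the corollary are the correct ones.

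I expect the only delicate point to be the index bookkeeping in the first step: one must check that \emph{both} source terms of the reflected recurrence sit one level lower in the second index (so that the relation really has the form \eqref{eq:def_triangle_rule} rather than some skew variant), and that the range $1\le j\le n$ matches the original constraint $1\le k\le n$. A direct double induction on $n$ and $j$, mirroring the proof of Theorem~\ref{th:a_nk} but driven by the inverse rule read off from the right-hand diagram of Figure~\ref{fig:construction_}, is available as a fallback; however, the reflection argument is shorter and reuses the already-proved theorem without recomputation.
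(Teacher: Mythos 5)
Your proof is correct and is essentially the paper's own argument: the paper likewise passes to the right-to-left array $b_{n,r}=a_{n,n-r}$, observes that it obeys rule \eqref{eq:def_triangle_rule} with the parameters $U=1/u$, $V=-v/u$, applies Theorem~\ref{th:a_nk} with $k_0=0$, and substitutes $k=n-r$. Your reflection $\tilde a_{n,j}=a_{n,n-j}$ is the same construction, with the index bookkeeping spelled out more explicitly.
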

\begin{proof} We suppose that the sequence  $\{b_n=a_{n,n}\}$ with $\{b_n\}_{n=0}^{\infty}=\{a_{n,n}\}_{n=0}^{\infty}$ is given. 
Let $U=1/u$ and $V=-v/u$, then  the right equality is 
$a_{{n},{k-1}}=Ua_{n,k}+Va_{{n-1},{k-1}}$, according to \eqref{eq:def_triangle_rule}. Based on this connection, we can write the entries of the binomial interpolated triangle (from right to left) by $b_{n,r}$, where $b_{n,0}=b_n$, $b_{n,r}=Ub_{n,{r-1}}+Vb_{{n-1},{r-1}}$ $(1\leq r\leq n)$.  Using relation \eqref{eq:interpolated_transf_gen}, proved in Theorem~\ref{th:a_nk}, we obtain  
$$b_{n,r} = \sum_{i=0}^{r}\binom{r}{i}U^{i}V^{r-i}b_{{n-r+i},0}.$$
Since $b_{n,r}=a_{n,{n-r}}$, considering the substitution $k=n-r$, the thesis follows.
\end{proof}

If $k=0$, as a direct consequence of Theorem~\ref{th:a_nk_nn},  the inverse transformation of the binomial interpolated transform \eqref{eq:interpolated_trans} is    

\begin{equation}\label{eq:inverse_interpolated_transf}
a_{n,0}=\sum_{i=0}^{n}\binom{n}{i}\left(\frac1u \right) ^{i}\left(-\frac{v}{u} \right) ^{n-i}a_{i,i}.
\end{equation}

\section{Binary binomial interpolated triangle}

Let $a_{n,0} = a_n$, where $\{a_n\}_{n=0}^{\infty}$ is a binary recursive sequence defined by
\begin{equation}\label{eq:def_binary_seq}
a_n=\alpha a_{n-1}+\beta a_{n-2}\qquad  ( 2\leq n,\  \alpha, \beta\in \mathbb{R},\  \alpha \beta\ne0), 
\end{equation} 
with  initial values $a_0, a_1\in \mathbb{R}$ ($|a_0|+|a_1|\ne 0 $). Then we call the binomial interpolated triangle  \textit{binary binomial interpolated triangle} and we let \BT$(a_0,a_1,\alpha,\beta;u,v)$ (in short \BT) denote it. 

Bhadouria et al.\ \cite{Bhadouria} gave three special examples, where the triangles are generated by the 4-Lucas sequence. They are \BT$(2,4,4,1;1,1)$, \BT$(2,4,4,1;4,1)$ and \BT$(2,4,4,1;4,4)$.

 From now on, we will use two important variables  
\begin{equation*}
	\mathcal{A}=u\alpha+2v\qquad\text{and}\qquad\mathcal{B}=u^2\beta -uv\alpha -v^2.
\end{equation*}

First of all, we give some recursive formulas for terms $a_{n,k}$. The results of the next technical lemma will be useful during the proofs of the further theorems.

\begin{lemma}
 The following recurrence relations hold 	 
\begin{alignat}{2}
a_{n,k} & = \alpha a_{{n-1},{k}}+\beta a_{{n-2},{k}} \qquad  &(2\leq k+2\leq n),\label{eq:t1}\\
\quad a_n^k & = \frac{u \beta}{v} a_{{n-1},{k}}-\frac{\mathcal{B}}{v}  a_{{n-1},{k-1}}\qquad &(2\leq k+1\leq n),\label{eq:t2}\\	
a_{n,k} & = \frac{u\alpha+v}{u} a_{{n-1},{k}}+\frac{\mathcal{B}}{u} a_{{n-2},{k-1}}\qquad &(2\leq k+1\leq n),\label{eq:t3}\\
\nonumber\\
a_{n,k} & = (u\alpha+v) a_{{n-1},{k-1}}+u\beta a_{{n-2},{k-1}}\qquad &(2\leq k+1\leq n),\label{eq:t4}\\
a_{n,k} & = \frac{u^2\beta +v^2}{v} a_{{n-1},{k-1}}- \frac{u\mathcal{B}}{v} a_{{n-1},{k-2}}\qquad &(2\leq k\leq n),\label{eq:t5}\\
a_{n,k} & = \mathcal{A}\, a_{{n-1},{k-1}}+ \mathcal{B}\, a_{{n-2},{k-2}}\qquad &(2\leq k\leq n),\label{eq:t6}\\    
\nonumber\\
a_{n,k} & = \frac{2u\beta-v\alpha}{\beta} a_{{n},{k-1}}-\frac{\mathcal{B}}{\beta}  a_{{n},{k-2}}\qquad &(2\leq k\leq n).\label{eq:t7} 
\end{alignat}	
\end{lemma}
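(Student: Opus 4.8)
The plan is to reduce all seven relations to two basic facts: the defining triangle rule \eqref{eq:def_triangle_rule}, which I abbreviate $R$, and the column recurrence \eqref{eq:t1}. I would prove \eqref{eq:t1} first, since the other six relations then become pure linear bookkeeping. For a fixed column $k$, Theorem~\ref{th:a_nk} (with $k_0=0$) writes $a_{n,k}=\sum_{i=0}^{k}\binom{k}{i}u^iv^{k-i}a_{n-k+i}$, i.e. a fixed linear combination of shifted copies of the base sequence $\{a_m\}$. Each shifted copy obeys $a_m=\alpha a_{m-1}+\beta a_{m-2}$, hence so does the combination; tracking the smallest index $n-k\ge 2$ reproduces exactly the range $2\le k+2\le n$. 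This gives \eqref{eq:t1}.

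Next I would climb the remaining relations in the order \eqref{eq:t4}; then \eqref{eq:t2} and \eqref{eq:t3}; then \eqref{eq:t6}; and finally \eqref{eq:t5}. For \eqref{eq:t4}, substitute \eqref{eq:t1} at column $k-1$ into $a_{n,k}=ua_{n,k-1}+va_{n-1,k-1}$ to obtain $a_{n,k}=(u\alpha+v)a_{n-1,k-1}+u\beta a_{n-2,k-1}$. For \eqref{eq:t2} and \eqref{eq:t3} I would use $R$ at $(n-1,k)$, namely $a_{n-1,k}=ua_{n-1,k-1}+va_{n-2,k-1}$, to eliminate from \eqref{eq:t4} either $a_{n-2,k-1}$ or $a_{n-1,k-1}$; in each case the surviving coefficient collapses to $-\mathcal{B}/v$ (for \eqref{eq:t2}) or $\mathcal{B}/u$ (for \eqref{eq:t3}), which is precisely the identity $u\beta-\tfrac{v(u\alpha+v)}{u}=\tfrac{\mathcal{B}}{u}$ and its twin. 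This is exactly where the definition of $\mathcal{B}$ is forced.

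For the diagonal recurrence \eqref{eq:t6} the key observation is that the shifted form of \eqref{eq:t2}, namely $u\beta a_{n-2,k-1}=va_{n-1,k-1}+\mathcal{B}a_{n-2,k-2}$, replaces the second term of \eqref{eq:t4}; since $(u\alpha+v)+v=u\alpha+2v=\mathcal{A}$, the coefficient of $a_{n-1,k-1}$ becomes exactly $\mathcal{A}$, giving \eqref{eq:t6}. Relation \eqref{eq:t5} then follows by feeding $R$ at $(n-1,k-1)$ into \eqref{eq:t6}, using $\mathcal{A}+\mathcal{B}/v=(u^2\beta+v^2)/v$. This chain covers the interior $2\le k\le n-1$, but not the right-diagonal case $k=n$ of \eqref{eq:t5} and \eqref{eq:t6}, since it would require \eqref{eq:t4} at $k=n$, which is out of range. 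I would dispatch $k=n$ separately from the corollary: the right diagonal $a_{n,n}=\sum_{i}\binom{n}{i}u^iv^{n-i}a_i$ is a combination of $(u\lambda+v)^n$ and $(u\mu+v)^n$ for the characteristic roots $\lambda,\mu$ of \eqref{eq:def_binary_seq}, hence obeys the $(\mathcal{A},\mathcal{B})$-recurrence.

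I expect the genuine obstacle to be the row recurrence \eqref{eq:t7}, which lives on a single row and is therefore orthogonal to the row-to-row rule $R$; no local substitution succeeds, because $a_{n-1,k-1}$ cannot be expressed through $a_{n,k-1}$ and $a_{n,k-2}$ alone. The device I would use is to apply \eqref{eq:t2} at $(n,k-1)$ and substitute both rule instances $a_{n-1,k-1}=\tfrac1v(a_{n,k}-ua_{n,k-1})$ and $a_{n-1,k-2}=\tfrac1v(a_{n,k-1}-ua_{n,k-2})$; the term $a_{n,k}$ then reappears, and solving for it yields \eqref{eq:t7} after the simplification $v^2+u^2\beta+\mathcal{B}=u(2u\beta-v\alpha)$ and cancelling the common factor $u$. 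As a uniform alternative, and as the structural reason $\mathcal{A},\mathcal{B}$ take their stated form, one may substitute the geometric solution $a_m=\lambda^m$, which gives the separated shape $a_{n,k}=\lambda^{n-k}(u\lambda+v)^k$; every one of the seven relations then reduces to the two scalar identities $\lambda^2=\alpha\lambda+\beta$ and $(u\lambda+v)^2=\mathcal{A}(u\lambda+v)+\mathcal{B}$, and by linearity over the (generically two-dimensional) solution space this settles all cases and all ranges simultaneously.
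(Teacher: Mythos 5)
Your proposal is correct, and it takes a genuinely different route from the paper. The paper proves \eqref{eq:t1} by induction on $k$, parametrizing a local patch of the triangle by two entries $x=a_{n-3,k-1}$, $y=a_{n-2,k-1}$ (Figure~\ref{fig:steps01-f}) and writing every nearby entry as an explicit linear combination of $x$ and $y$; each of \eqref{eq:t2}--\eqref{eq:t7} is then read off that patch as an equality of coefficients of $x$ and $y$. You instead obtain \eqref{eq:t1} from Theorem~\ref{th:a_nk} by linearity (legitimate, since that theorem is proved before the lemma, and arguably cleaner than the paper's induction), and then chain the remaining relations into one another through the defining rule \eqref{eq:def_triangle_rule}. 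Your coefficient identities ($u\beta-\tfrac{v}{u}(u\alpha+v)=\tfrac{\mathcal{B}}{u}$, $\mathcal{A}+\tfrac{\mathcal{B}}{v}=\tfrac{u^2\beta+v^2}{v}$, $v^2+u^2\beta+\mathcal{B}=u(2u\beta-v\alpha)$) and your range bookkeeping all check out, including the correct observation that \eqref{eq:t7} requires the two-row elimination trick rather than a one-step substitution. What the chaining buys is that nothing beyond the two recurrences is ever computed; what the paper's patch buys is that every relation is verified independently and uniformly.

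The one soft spot is your treatment of the boundary case $k=n$ of \eqref{eq:t5} and \eqref{eq:t6}. Your spectral argument assumes the base sequence is a combination of two geometric sequences; when $\alpha^2+4\beta=0$ the solution space of \eqref{eq:def_binary_seq} is spanned by $\lambda^m$ and $m\lambda^m$, so ``linearity over the (generically two-dimensional) solution space'' does not settle all cases as claimed --- you would need a separate check of $m\lambda^m$ or a polynomial-identity/continuity argument. But the detour is avoidable inside your own framework by reordering the chain: derive \eqref{eq:t5} \emph{before} \eqref{eq:t6}, substituting \eqref{eq:t2} at $(n,k-1)$ into $a_{n,k}=ua_{n,k-1}+va_{n-1,k-1}$; the range of \eqref{eq:t2} there is exactly $2\le k\le n$, so this covers $k=n$. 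Then eliminate $a_{n-1,k-2}$ from \eqref{eq:t5} via \eqref{eq:def_triangle_rule} at $(n-1,k-1)$ and use $u^2\beta+v^2-\mathcal{B}=v\mathcal{A}$ to obtain \eqref{eq:t6} on its full range. With that reordering the whole lemma follows from \eqref{eq:def_triangle_rule} and \eqref{eq:t1} alone, with no case analysis on the characteristic roots. (Incidentally, the paper's patch as drawn has the same blind spot at the right edge, since $a_{n-3,n-2}$ does not exist when $k=n$; your explicit attention to that case is a point in your favor, even though your particular fix needs the repeated-root caveat.)
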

\begin{proof}
	First we prove relation \eqref{eq:t1} by induction. For $k=0$ it corresponds to definition \eqref{eq:def_binary_seq}. Let us suppose that this formula is true up to $k-1$. So, if $x=a_{n-3,k-1}$ and $y=a_{n-2,k-1}$, then  $a_{n-1,k-1}= \beta x + \alpha y$ and $a_{n,k-1}=  \alpha\beta x+(\beta +{\alpha}^2)y$ hold. Figure~\ref{fig:steps01-f}, which is a suitable part of Figure~\ref{fig:construction_}, depicts it. 
	Now $a_{n-2,k}= v x + uy$, $a_{n-1,k}= u\beta  x+(u\alpha +v)y$ and $a_{n,k}= (u\alpha \beta +v \beta) x + (u {\alpha}^2 + v \alpha+ u \beta )y$, which gives \eqref{eq:t1}. Moreover, we proved with it, that Figure~\ref{fig:steps01-f} could be any part of  Figure~\ref{fig:construction_}. 
	
	In order to prove the further equations we can also use the relevant part of Figure~\ref{fig:steps01-f}. For example, in the case \eqref{eq:t4} $a_{n,k}=(u\alpha+v)\, y + u\beta\, x = u \beta  x +(u \alpha +v)y$.
\end{proof}

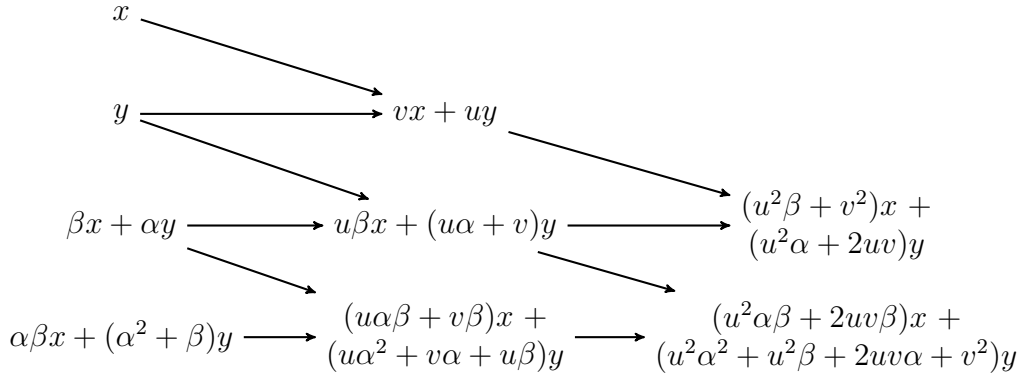
\begin{figure}[h!t] \centering
	\scalebox{0.99}{ \begin{tikzpicture}[->,xscale=3.0,yscale=1.5, auto,swap,every node/.style={shape=rectangle,draw=none}]

		\node(a0) at (0,-0.1)    {$x$};
		\node (a1) at (0,-1)   {$y$};
		\node (a2) at (0,-2)  {$\beta x + \alpha y $};
		\node (a3) at (0,-3)   {$ \alpha\beta x+({\alpha}^2 +\beta )y $};
		\node (b1) at (1.45,-1)   {$v x+u y$};
		\node (b2) at (1.45,-2)  {$u \beta  x +(u \alpha +v)y$};
		\node (b3) at (1.45,-3) [ align=center]   {$(u\alpha \beta +v \beta) x$ +  \\ $(u {\alpha}^2 + v \alpha+ u \beta )y$};
		\node (c1) at (3.2,-2)  [ align=center]  {$(u^2\beta+v^2)  x $ + \\ $(u^2\alpha +2uv)y$};
		\node (c2) at (3.2,-3)  [align=center] {$ (u^2\alpha\beta+2uv\beta) x$ + \\ $ (u^2{\alpha}^2+u^2\beta+2uv\alpha+v^2) y$};

		\path (a0) edge node[draw=none] {} (b1) [thick];
		\path (a1) edge node[draw=none] {} (b1) [thick];
		\path (a1) edge node[draw=none] {} (b2) [thick];
		\path (a2) edge node[draw=none] {} (b2) [thick];
		\path (a2) edge node[draw=none] {} (b3) [thick];
		\path (a3) edge node[draw=none] {} (b3)[thick];
		\path (b1) edge node[draw=none] {} (c1)[thick];
		\path (b2) edge node[draw=none] {} (c1)[thick];
		\path (b2) edge node[draw=none] {} (c2)[thick];
		\path (b3) edge node[draw=none] {} (c2)[thick];		
		\end{tikzpicture}}
	\caption{A part of the growing}
	\label{fig:steps01-f}
\end{figure}

The relation \eqref{eq:t6} gives that the right diagonal sequence $b$ (and the $i$-th diagonals parallel to it) satisfies the binary recurrence relation
\begin{equation}
 \forall n \geq 2\quad	b_n=\mathcal{A}b_{n-1}+\mathcal{B}b_{n-2}
\end{equation}
with initial values $b_0=a_0$, $b_1=ua_1+va_0$ (generally, $b_0=a_{i,0}$, $b_1=a_{i+1,1}$, $i\geq0$). Moreover, the system of equations $\mathcal{A}=0$, $\mathcal{B}=0$ gives  $v=-u\alpha/2$ and $u^2(\alpha^2+4\beta)= 0$. Thus, as the condition $uv\ne0$ holds, if  $v\ne-u\alpha/2$ or $\alpha^2+4\beta\ne0$, then $|\mathcal{A}|+|\mathcal{B}|\ne0$, otherwise $\mathcal{A}=\mathcal{B}=0$.

\subsection{Sums and alternating sums of rows}

Let  $s=\{s_n\}_{n=0}^{\infty}$ be the sequence whose elements are the sums of the values belonging to the $n$-th row of a binomial interpolated triangle. We have 
\begin{equation}\label{eq:sumrow}
s_n=\sum_{k=0}^{n}a_{n,k}=\sum_{k=0}^{n} \sum_{i=0}^{k}\binom{k}{i} u^{i}v^{k-i}a_{n-k+i,0}.
\end{equation}

We obtain a linear recurrence for $s$, whose coefficients depend only on $\alpha$, $\beta$, $\mathcal{A}$ and $\mathcal{B}$, i.e., the coefficients of the binary recurrences related to sequences $a$ and $b$.

\begin{theorem}
The sequence $s$ satisfies the	following fourth order linear homogeneous recurrence
\begin{equation}\label{eq:sn}
\forall n\geq 4\quad s_{n} = (\alpha+\mathcal{A})s_{n-1}+(\beta-\alpha \mathcal{A}+\mathcal{B})s_{n-2}-(\alpha \mathcal{B}+\beta \mathcal{A})s_{n-3}-\beta \mathcal{B} s_{n-4}.
\end{equation}	
\end{theorem}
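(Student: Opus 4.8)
The plan is to exploit the factorisation of the characteristic polynomial of \eqref{eq:sn}. A direct computation shows that
$$x^4 - (\alpha+\mathcal{A})x^3 - (\beta-\alpha\mathcal{A}+\mathcal{B})x^2 + (\alpha\mathcal{B}+\beta\mathcal{A})x + \beta\mathcal{B} = (x^2-\alpha x-\beta)(x^2-\mathcal{A}x-\mathcal{B}),$$
so \eqref{eq:sn} is equivalent to the assertion that $s$ is annihilated by the product of the shift-operator polynomials $E^2-\alpha E-\beta$ and $E^2-\mathcal{A}E-\mathcal{B}$, the annihilators of the left-leg sequence and of the diagonals parallel to the right leg, respectively. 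The strategy, then, is first to produce a second-order recurrence for $s$ whose inhomogeneous part is annihilated by $E^2-\alpha E-\beta$, and then to remove that part by applying the latter operator.

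First I would use the diagonal recurrence \eqref{eq:t6}, valid for $2\le k\le n$, to write, for $n\ge 2$,
$$s_n = a_{n,0}+a_{n,1}+\sum_{k=2}^{n}\bigl(\mathcal{A}\,a_{n-1,k-1}+\mathcal{B}\,a_{n-2,k-2}\bigr).$$
Re-indexing the two inner sums turns them into $\mathcal{A}(s_{n-1}-a_{n-1,0})$ and $\mathcal{B}\,s_{n-2}$, since the shifted ranges telescope onto full rows. Using $a_{n,1}=u\,a_{n,0}+v\,a_{n-1,0}$ from \eqref{eq:def_triangle_rule} together with $a_{n,0}=a_n$, this collapses to a second-order inhomogeneous recurrence
$$s_n = \mathcal{A}\,s_{n-1}+\mathcal{B}\,s_{n-2}+t_n, \qquad t_n := (1+u)a_n-(u\alpha+v)a_{n-1},$$
where I have used $v-\mathcal{A}=-(u\alpha+v)$. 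The only care needed at this stage is the bookkeeping of the boundary columns $k=0,1$ and of the admissible index ranges.

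The final step is to eliminate $t_n$. Because $t_n$ is a fixed linear combination of consecutive terms of the binary sequence $a$, it satisfies $t_n=\alpha t_{n-1}+\beta t_{n-2}$ (for $n\ge 3$), i.e.\ $t$ is annihilated by $E^2-\alpha E-\beta$. Substituting $t_m=s_m-\mathcal{A}s_{m-1}-\mathcal{B}s_{m-2}$ for $m=n,n-1,n-2$ into this relation and collecting the coefficients of $s_{n-1},\dots,s_{n-4}$ reproduces exactly \eqref{eq:sn}, valid once all indices are legitimate, namely for $n\ge 4$. I expect the main obstacle to be purely clerical rather than conceptual: verifying the polynomial factorisation displayed above and tracking the validity ranges of \eqref{eq:t6} and of the binary recurrence so that the fourth-order relation is asserted from the correct starting index. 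The whole content is captured by the two-factor annihilator picture, so the computation, while lengthy, carries no surprises.
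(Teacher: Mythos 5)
Your proof is correct, but it is not the paper's proof --- it is, in effect, its mirror image, and in one respect a cleaner one. The paper expands row $n$ along the columns using \eqref{eq:t1} (coefficients $\alpha,\beta$), which leaves boundary terms $a_{n,n-1}$, $a_{n,n}$ on the \emph{right} leg; it then removes these by writing shifted copies of the resulting relation (equations \eqref{eq:sumt1}--\eqref{eq:sumt3}) and solving a $2\times2$ linear system for $a_{n-1,n-1}$ and $a_{n-2,n-2}$ before substituting back. You instead expand row $n$ along the diagonals using \eqref{eq:t6} (coefficients $\mathcal{A},\mathcal{B}$), which leaves boundary terms on the \emph{left} leg, namely $t_n=(1+u)a_n-(u\alpha+v)a_{n-1}$, and you remove them by applying the operator $E^2-\alpha E-\beta$, which annihilates every fixed linear combination of consecutive terms of $a$. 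The two routes buy different things. Yours makes the factorization $(x^2-\alpha x-\beta)(x^2-\mathcal{A}x-\mathcal{B})$ of the characteristic polynomial the visible engine of the argument, and, more substantively, your elimination is unconditional: applying an annihilating operator requires no non-degeneracy hypothesis, whereas the paper's elimination amounts to solving a linear system whose determinant is (up to sign) $\mathcal{B}(u+1)^2+(\alpha u+v)(\mathcal{A}u+v)$, so it tacitly needs this quantity to be nonzero; the degenerate case is handled only by example in Remark~\ref{rem:sn}. What the paper's route buys in exchange is exactly those intermediate relations \eqref{eq:sumt1}--\eqref{eq:sumt2}, which are what Remark~\ref{rem:sn} uses to discuss when \eqref{eq:sn} is the minimal-order recurrence --- information your operator argument does not produce. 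Your index bookkeeping ($n\ge2$ for the inhomogeneous second-order relation, $n\ge3$ for the recurrence of $t$, $n\ge4$ for the final relation) is correct as stated.
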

\begin{proof} 
	The sum of row $n$ can be given by the previous two ones in form 
	\begin{equation}\label{eq:sumts0}
	s_n=\alpha (s_{n-1}-a_{n-1,n-1})+\beta s_{n-2}+a_{n,n-1}+a_{n,n}.
	\end{equation}	
	Since $u a_{n,n-1}=a_{n,n}- va_{n-1,n-1}$ then from \eqref{eq:sumts0} we obtain
	\begin{equation}\label{eq:sumt0}
	u\, s_n= u\alpha\, s_{n-1}+ u\beta\, s_{n-2}+(u+1) a_{n,n} -
	(\alpha u +v ) a_{n-1,n-1}.
	\end{equation}
	Applying  \eqref{eq:t6} and as $\mathcal{A}-\alpha u-v=v$ we have 
	\begin{equation}\label{eq:sumt1}
	u\, s_n= u\alpha \, s_{n-1}+ u\beta\, s_{n-2} + (\mathcal{A}u+v)  a_{n-1,n-1} + \mathcal{B}(u+1)  a_{n-2,n-2}. 
	\end{equation}
	Now we transform \eqref{eq:sumt0} into 
	\begin{equation}\label{eq:sumt2}
	u \,s_{n-1}= u\alpha\, s_{n-2}+ u\beta\, s_{n-3}+(u+1) a_{n-1,n-1} -
	(\alpha u +v ) a_{n-2,n-2}.
	\end{equation}
	Transforming again \eqref{eq:sumt0} into form $u\,s_{n-2}=u\alpha\, s_{n-3}+\ldots $ and using \eqref{eq:t6} we gain 
	\begin{equation}\label{eq:sumt3}
	u\mathcal{B}\, s_{n-2}= u\mathcal{B}(\alpha \, s_{n-3}+ \beta \, s_{n-4})
	- (\alpha u +v ) a_{n-1,n-1} 
	+\left( \mathcal{B}(u+1) +  (\alpha u +v )\mathcal{A} \right)  a_{n-2,n-2}.
	\end{equation}	
	
	Finally, let us express $a_{n-1,n-1}$ and $a_{n-2,n-2}$ from \eqref{eq:sumt1} and \eqref{eq:sumt2}, respectively. We substitute them into \eqref{eq:sumt3} and  obtain \eqref{eq:sn}.  
\end{proof}

\begin{remark}\label{rem:sn}
It should be noticed that the recurrence \eqref{eq:sumts0} is the minimal order recurrence of $s$ when 
$$\mathcal{B}(u+1)^2+(\alpha u+v)(\mathcal{A}u+v)\ne0.$$
Indeed, in this case the determinant of coefficients of the system (from \eqref{eq:sumt1} and \eqref{eq:sumt2})
$$
\begin{cases}
(\mathcal{A}u+v)  a_{n-1,n-1} + \mathcal{B}(u+1)  a_{n-2,n-2} =u\, s_n- u\alpha \, s_{n-1}- u\beta\, s_{n-2} \\
(u+1) a_{n-1,n-1} -
(\alpha u +v ) a_{n-2,n-2} =	u \,s_{n-1}- u\alpha\, s_{n-2}- u\beta\, s_{n-3}
\end{cases}
$$
is different from $0$, so the system has a unique solution, otherwise when
$$\mathcal{B}(u+1)^2+(\alpha u+v)(\mathcal{A}u+v)=0$$
or, equivalently, with a little bit of calculations, when $\mathcal{B}(u+1)^2+(\alpha-u)(v+u\alpha)=0$ the minimal order of the linear recurrence of $s$ should be less then four. For example, the solution $u=-1$ and $v=\alpha$ implies that $\mathcal{A}=\alpha$ and $\mathcal{B}=\beta$. Then the relation \eqref{eq:sumt1} becomes 
\begin{equation}\label{eq:sumss}
\forall n\geq 2\quad s_n= \alpha \, s_{n-1}+ \beta\, s_{n-2}. 
\end{equation}
Obviously, summing relations $s_n= \alpha  s_{n-1}+ \beta s_{n-2}$, $-\alpha s_{n-1}=- \alpha^2  s_{n-2}-\alpha \beta s_{n-3}$ and $-\beta s_{n-2}= -\alpha\beta  s_{n-3}- \beta^2 s_{n-4}$ we find that the recurrence \eqref{eq:sn} also holds for sequence $s$, but it is not the minimal order one. 
\end{remark}

Let the sequence $\bar{s}=\{\bar{s}_n\}_{n=0}^{\infty}$ be the alternating sum sequence, where the terms are the values of rows of a binomial interpolated triangle, so that
\begin{equation*}\label{eq:altsumrow}
\bar{s}_n=\sum_{k=0}^{n}(-1)^k a_{n,k}=\sum_{k=0}^{n} (-1)^k \sum_{i=0}^{k}\binom{k}{i} u^{i}v^{k-i}a_{{n-k+i},0}.
\end{equation*}

\begin{theorem}
The sequence $\bar{s}$ satisfies the following fourth order linear homogeneous recurrence 
\begin{equation}\label{eq:altsn}
\forall n\geq 4\quad{\bar{s}}_{n} = (\alpha-\mathcal{A})\bar{s}_{n-1}+(\beta+\alpha \mathcal{A}+\mathcal{B})\bar{s}_{n-2}-(\alpha \mathcal{B}-\beta \mathcal{A})\bar{s}_{n-3}-\beta \mathcal{B} \bar{s}_{n-4}.
\end{equation}
\end{theorem}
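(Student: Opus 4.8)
\section*{Proof proposal}

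The plan is to reduce the alternating-sum recurrence to the already-established sum recurrence \eqref{eq:sn} by absorbing the alternating sign into the triangle's parameters. The crucial observation is that the sign $(-1)^k$ interacts cleanly with the growth rule \eqref{eq:def_triangle_rule}: setting $\tilde{a}_{n,k}=(-1)^k a_{n,k}$, the rule $a_{n,k}=u a_{n,k-1}+v a_{n-1,k-1}$ becomes $\tilde{a}_{n,k}=(-u)\tilde{a}_{n,k-1}+(-v)\tilde{a}_{n-1,k-1}$, since $(-1)^k=-(-1)^{k-1}$. Meanwhile the left diagonal is untouched, $\tilde{a}_{n,0}=a_{n,0}=a_n$, so it still obeys \eqref{eq:def_binary_seq} with the same $\alpha,\beta$. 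Hence $\{\tilde{a}_{n,k}\}$ is itself a binary binomial interpolated triangle, namely \BT$(a_0,a_1,\alpha,\beta;-u,-v)$, and the nondegeneracy $(-u)(-v)=uv\ne0$ persists, so every lemma and theorem of this section applies to it verbatim.

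First I would record this reparametrization and note that the alternating row sum of the original triangle is exactly the ordinary row sum of the new one, $\bar{s}_n=\sum_{k=0}^n(-1)^k a_{n,k}=\sum_{k=0}^n \tilde{a}_{n,k}$. Next I would compute how the auxiliary quantities transform under $u\mapsto-u$, $v\mapsto-v$. From $\mathcal{A}=u\alpha+2v$ and $\mathcal{B}=u^2\beta-uv\alpha-v^2$ one gets $\tilde{\mathcal{A}}=-u\alpha-2v=-\mathcal{A}$ and $\tilde{\mathcal{B}}=u^2\beta-uv\alpha-v^2=\mathcal{B}$; the sign flip affects only $\mathcal{A}$, while $\mathcal{B}$ is invariant.

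The final step is simply to invoke the sum theorem \eqref{eq:sn} for the triangle \BT$(a_0,a_1,\alpha,\beta;-u,-v)$ and substitute $\tilde{\mathcal{A}}=-\mathcal{A}$, $\tilde{\mathcal{B}}=\mathcal{B}$, $\tilde{s}_n=\bar{s}_n$. Replacing $\mathcal{A}$ by $-\mathcal{A}$ in \eqref{eq:sn} turns $(\alpha+\mathcal{A})$ into $(\alpha-\mathcal{A})$, turns $(\beta-\alpha\mathcal{A}+\mathcal{B})$ into $(\beta+\alpha\mathcal{A}+\mathcal{B})$, turns $-(\alpha\mathcal{B}+\beta\mathcal{A})$ into $-(\alpha\mathcal{B}-\beta\mathcal{A})$, and leaves $-\beta\mathcal{B}$ fixed, which is precisely \eqref{eq:altsn}.

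I expect no serious obstacle along this route; the only point requiring care is verifying that the reparametrized array genuinely meets all the hypotheses of the preceding theory (same binary left diagonal, valid nonzero parameters) so that \eqref{eq:sn} may be applied as stated. As an alternative, one could imitate the proof of \eqref{eq:sn} line by line: start from the boundary identity $\bar{s}_n=\alpha\bar{s}_{n-1}+\beta\bar{s}_{n-2}-\alpha(-1)^{n-1}a_{n-1,n-1}+(-1)^{n-1}a_{n,n-1}+(-1)^n a_{n,n}$ coming from \eqref{eq:t1}, clear the denominator using $u a_{n,n-1}=a_{n,n}-v a_{n-1,n-1}$, apply \eqref{eq:t6} to the corner term $a_{n,n}$, and eliminate $a_{n-1,n-1}$ and $a_{n-2,n-2}$ from three index-shifted copies. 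In that route the only genuine bookkeeping burden is carrying the common factor $(-1)^{n-1}$ through the three relations; since it multiplies every corner term, it cancels upon elimination, but that is exactly the place where a sign slip would corrupt the result, so the substitution argument is the safer and shorter path.
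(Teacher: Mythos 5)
Your proof is correct, and its conclusion is reached by the same reduction the paper intends: the alternating row sums satisfy \eqref{eq:sn} with $\mathcal{A}$ replaced by $-\mathcal{A}$ and $\mathcal{B}$ left unchanged. The difference lies in how this is justified. The paper's proof is a two-line sign-tracking heuristic: it observes that the signs $(-1)^k$ are constant along directions parallel to the left diagonal but alternate along directions parallel to the right diagonal, and from this asserts that one ``only has to change the sign of $\mathcal{A}$'' in \eqref{eq:sn}, leaving the reader to check that this bookkeeping really survives every step of that earlier proof (in particular, why $\mathcal{B}$, which connects entries two steps apart along the right diagonal, is unaffected). Your reparametrization closes exactly this gap: setting $\tilde{a}_{n,k}=(-1)^k a_{n,k}$ exhibits the signed array as the genuine triangle \BT$(a_0,a_1,\alpha,\beta;-u,-v)$ --- the growth rule \eqref{eq:def_triangle_rule} holds with parameters $-u,-v$, the left diagonal and hence \eqref{eq:def_binary_seq} are untouched, and $(-u)(-v)=uv\ne0$ --- so the sum theorem applies verbatim to it. The substitution $(u,v)\mapsto(-u,-v)$ gives $\tilde{\mathcal{A}}=-\mathcal{A}$ and $\tilde{\mathcal{B}}=\mathcal{B}$ (the latter because $\mathcal{B}=u^2\beta-uv\alpha-v^2$ is invariant under simultaneous negation of $u$ and $v$), and \eqref{eq:altsn} drops out by pure substitution into \eqref{eq:sn}. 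In short, you prove the same statement by the same underlying mechanism, but packaged as a rigorous reduction to an already-proved theorem rather than as an informal sign argument; this costs a few extra lines and buys airtightness, and your closing alternative (re-running the proof of \eqref{eq:sn} with signs carried through) is indeed the riskier of the two routes, for precisely the reason you identify.
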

\begin{proof}
Row by row the signs of the entries in the alternating sums do not change in directions parallel to the left diagonal ($\sign((-1)^k a_{n,k})=\sign((-1)^k a_{n-1,k})$), but parallel to the right diagonal they change ($\sign((-1)^k a_{n,k})\ne\sign((-1)^{k-1} a_{n-1,k-1})$). Hence we only have to change the sign of $\mathcal{A}$ in the summation relation \eqref{eq:sn}.
\end{proof}

\begin{remark}
If  $u=-1$ and $v=\alpha$, then the relation \eqref{eq:altsn} becomes more simple, 
\begin{equation}\label{eq:sum_alterss}
 \forall n\geq 3\quad{\bar{s}}_{n} = (\alpha^2+2\beta)\bar{s}_{n-2}-\beta^2 \bar{s}_{n-4}.
\end{equation}
\end{remark}

\subsection{Rising diagonal sum sequence}

The sequence $d=\{d_n\}_{n=0}^{\infty}$ of sums of elements in rising diagonals has the following definition  
\begin{equation}
\forall n \geq 0\quad d_n=\sum_{k=\lceil\frac{n}{2}\rceil}^{n}a_{k,n-k}=\sum_{k=0}^{\lfloor\frac{n}{2}\rfloor}a_{n-k,k}.
\end{equation}

When we consider Pascal's arithmetical triangle it is well-known that these sums provide the Fibonacci sequence. We give the recurrence relation for the sequence of sums of elements belonging to rising diagonal (i.e.,``shallow diagonal" as they are defined in Wolfram Math World \cite{wolfram}) of our binomial interpolated triangles.  

\begin{theorem} The rising diagonals sums sequence $d$ of a \BT satisfies the sixth order linear recurrence relation 
	\begin{equation}\label{eq:Dn}
\forall n\geq6 \quad 	D_n=\mathcal{A}D_{n-2}+\mathcal{B}D_{n-4},
	\end{equation}
	where  $D_n=-d_n+\alpha d_{n-1}+\beta d_{n-2}$.
	Moreover, for even $n$ ($n=2k$, $n\geq2$), $D_n=-b_k$  also holds. 
\end{theorem}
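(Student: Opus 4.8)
The plan is to compute the ``defect'' $D_n=-d_n+\alpha d_{n-1}+\beta d_{n-2}$ explicitly, exploiting the fact that the row recurrence \eqref{eq:t1}, read as $a_{n-k,k}=\alpha a_{n-1-k,k}+\beta a_{n-2-k,k}$, is precisely the relation that would force $d_n=\alpha d_{n-1}+\beta d_{n-2}$ were it not for the boundary terms at the tip of each rising diagonal. Accordingly I would split into the two parities of $n$. Writing $n=2m$, I align the summands of $\alpha d_{2m-1}+\beta d_{2m-2}$ index by index and apply \eqref{eq:t1} to each pair $\alpha a_{2m-1-k,k}+\beta a_{2m-2-k,k}=a_{2m-k,k}$; the hypothesis $k+2\le 2m-k$ of \eqref{eq:t1} holds throughout the range $0\le k\le m-1$. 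This collapses $\alpha d_{2m-1}+\beta d_{2m-2}$ into $\sum_{k=0}^{m-1}a_{2m-k,k}$, which is exactly $d_{2m}$ minus its top term $a_{m,m}=b_m$. Hence $D_{2m}=-b_m$, which already settles the ``moreover'' claim.

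For odd $n=2m+1$ the same alignment works for $0\le k\le m-1$, but now $\alpha d_{2m}$ carries one extra term $\alpha a_{m,m}$ with no $\beta$-partner, while $d_{2m+1}$ carries the extra top term $a_{m+1,m}$. Collecting these boundary contributions gives $D_{2m+1}=\alpha b_m-a_{m+1,m}$. To express this purely through the right-diagonal sequence I would invoke the defining rule \eqref{eq:def_triangle_rule} in the form $b_{m+1}=a_{m+1,m+1}=u\,a_{m+1,m}+v\,b_m$, so that $a_{m+1,m}=(b_{m+1}-v b_m)/u$ and therefore $u\,D_{2m+1}=(u\alpha+v)b_m-b_{m+1}$.

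With $D$ pinned down on each parity class, the recurrence $D_n=\mathcal{A}D_{n-2}+\mathcal{B}D_{n-4}$ reduces to the single known fact (from \eqref{eq:t6}) that $\{b_n\}$ obeys $b_n=\mathcal{A}b_{n-1}+\mathcal{B}b_{n-2}$. In the even case $D_{2m}=-b_m$ and $\mathcal{A}D_{2m-2}+\mathcal{B}D_{2m-4}=-(\mathcal{A}b_{m-1}+\mathcal{B}b_{m-2})=-b_m$; in the odd case $u\,D_{2m+1}$ is the fixed combination $(u\alpha+v)b_m-b_{m+1}$ of two consecutive $b$-terms, and applying the $b$-recurrence twice, to rewrite $b_m$ via $b_{m-1},b_{m-2}$ and $b_{m+1}$ via $b_m,b_{m-1}$, reproduces $u\,D_{2m+1}$ from $u(\mathcal{A}D_{2m-1}+\mathcal{B}D_{2m-3})$. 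Both identities require $m\ge2$, comfortably within the stated range $n\ge6$; rewriting $D_n=\mathcal{A}D_{n-2}+\mathcal{B}D_{n-4}$ back in terms of $d$ then gives the advertised sixth-order recurrence for $d$.

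The main obstacle is purely bookkeeping: one must track exactly which summand of $\alpha d_{n-1}+\beta d_{n-2}$ lacks a partner (the tip term) and which new summand appears in $d_n$, and this differs between the even and odd cases. A useful preliminary check is the hypothesis range of \eqref{eq:t1}: verifying that $k+2\le n-k$ holds for every interior $k$ guarantees that only the single boundary term survives, so that no hidden correction terms are dropped when the two parities are merged.
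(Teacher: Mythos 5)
Your argument is correct and follows essentially the same route as the paper's own proof: both split on the parity of $n$, identify the even defects as $D_{2m}=-b_m$ and the odd defects as a fixed linear combination of two consecutive right-diagonal terms, and then conclude via the recurrence $b_n=\mathcal{A}b_{n-1}+\mathcal{B}b_{n-2}$ coming from \eqref{eq:t6}. The differences are only presentational—your uniform closed form $u\,D_{2m+1}=(u\alpha+v)b_m-b_{m+1}$, obtained from \eqref{eq:def_triangle_rule}, replaces the paper's explicit computation of $D_{n+3},D_{n+5},D_{n+7}$ via \eqref{eq:t3} and agrees with it, since substituting $b_{m+1}=\mathcal{A}b_m+\mathcal{B}b_{m-1}$ turns your expression into the paper's $-v\,b_m-\mathcal{B}\,b_{m-1}$—apart from one harmless index slip: the recurrence instances you invoke need $m\ge3$ (so that $D_{2m-4}$, respectively $D_{2m-3}$, exist and carry the closed forms), not $m\ge2$, which is exactly the stated range $n\ge6$.
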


We point out that with the equation \eqref{eq:Dn} is a concise expression for the following sixth order recurrence relation   
\begin{equation}d_{n} = \alpha d_{n-1}+(\beta+\mathcal{A}) d_{n-2}-\alpha \mathcal{A}  d_{n-3}+
(-\beta \mathcal{A}+\mathcal{B}) d_{n-4} - \alpha \mathcal{B}  d_{n-5} - \beta \mathcal{B} d_{n-6}.
\end{equation}

\begin{proof}
Without loss of generality, let $n=2k$. 
	The values of $d_{n+2}$ and $d_{n+3}$ can be given by the previous two ones in forms 
	\begin{align}
	d_{n+2} \ &=\  \alpha d_{n+1}+\beta d_{n}+ a_{k+1,k+1},\nonumber\\
	d_{n+3} \ &=\  \alpha \left(d_{n+2}-a_{k+1,k+1}\right)+ \beta d_{n+1}+a_{k+2,k+1},\nonumber\\
	\intertext{generally for even and odd indexes we have}
	d_{n+2i} \ &=\  \alpha d_{n+2i-1}+\beta d_{n+2i-2}+ a_{k+i,k+i},\nonumber\\
	d_{n+2i+1} \ &=\  \alpha \left(d_{n+2i}-a_{k+i,k+i}\right)+ \beta d_{n+2i-1}+a_{k+i+1,k+i}.\nonumber
	\end{align}

When we consider $D_n$, with $n$ even, we obtain
\begin{equation*}
\forall i\geq1\quad	a_{{k+i},{k+i}} = d_{n+2i}- \alpha d_{n+2i-1}-\beta d_{n+2i-2}=-D_{n+2i},
\end{equation*} 
	and according to  \eqref{eq:t6}
\begin{equation*}
	D_{n+6}-\mathcal{A}D_{n+4}-\mathcal{B}D_{n+2}= -a_{k+3,k+3}+ \mathcal{A} a_{k+2,k+2}+ \mathcal{B} a_{k+1,k+1}=0.
\end{equation*}
	
If we deal with the case $D_n$, with $n$ odd, then, using relations \eqref{eq:t3} and \eqref{eq:def_triangle_rule},  we find
	\begin{eqnarray*}
		d_{n+3}&=&\alpha d_{n+2}+ \beta d_{n+1}+ \frac{v}{u} a_{k+1,k+1} + \frac{\mathcal{B}}{u} a_{k,k},\label{eq:sumd_n3}\\
		d_{n+5}&=&\alpha d_{n+4}+ \beta d_{n+3}+ \frac{v\mathcal{A}+\mathcal{B}}{u} a_{k+1,k+1} + \frac{v\mathcal{B}}{u} a_{k,k},\label{eq:sumd_n5}\\
		d_{n+7}&=&\alpha d_{n+6}+ \beta d_{n+5}+ \frac{v\mathcal{A}^2+v\mathcal{B}+\mathcal{A}\mathcal{B}}{u} a_{k+1,k+1} + \frac{v\mathcal{A}\mathcal{B}+\mathcal{B}^2}{u} a_{k,k}.\label{eq:sumd_n7}
	\end{eqnarray*}
Thus 
	\begin{multline*}
	u(D_{n+7}-\mathcal{A}D_{n+5}-\mathcal{B}D_{n+3})= \\
	(-(v\mathcal{A}^2+v\mathcal{B}+\mathcal{A}\mathcal{B})+\mathcal{A}(v\mathcal{A}+\mathcal{B})+v\mathcal{B}) a_{k+1,k+1} \\ +(-(v\mathcal{A}\mathcal{B}+\mathcal{B}^2)+v\mathcal{A}\mathcal{B}+\mathcal{B}^2 ) a_{k,k}=0.
	\end{multline*}
\end{proof}

\subsection{Central elements}

The central elements of the binomial triangles are the terms $a_{2k,k}$ with $k\geq0$. In this subsection we give the relation between them, moreover it turns out that the before mentioned recurrence relation holds for all the columns (which are parallel to the vertical axis of the triangle).
These sequences are defined by $\{a_{{2k+\ell},k}\}_{k=k_0}^{\infty}$ with $\ell\in \mathbb{Z}$ and
\begin{equation}\label{eq:2k+l}
k_0=\begin{cases}
\ 0,\quad &\text{ if } \ell\geq0;\\
\lvert\ell\lvert, \quad &\text{ if } \ell<0.
\end{cases}
\end{equation}

Moreover,  if $\ell=0$, then  it is the sequence of the central elements. 

\begin{theorem} All the sequences $\{a_{{2k+\ell},k}\}_{k=k_0}^{\infty}$ of the binomial interpolated triangle with $\ell\in \mathbb{Z}$ and $k_0$ defined by \eqref{eq:2k+l}
 satisfy the same binary homogeneous recurrence relation 
\begin{equation}\label{eq:cent}
c_{k+2}=(\alpha^2u+\alpha v+2\beta u)c_{k+1} -\beta \mathcal{B}c_{k},
\end{equation}
where $c_k=a_{{2k+\ell},k}$.
\end{theorem}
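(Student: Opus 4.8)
The plan is to avoid manipulating the triangle's two-dimensional recurrences directly and instead pass to a closed form for $a_{n,k}$ in terms of the characteristic roots of the defining binary recurrence \eqref{eq:def_binary_seq}. Writing $a_m=a_{m,0}$ in Binet form $a_m=A\lambda_1^{m}+B\lambda_2^{m}$, where $\lambda_1,\lambda_2$ are the roots of $x^2=\alpha x+\beta$ (so $\lambda_1+\lambda_2=\alpha$ and $\lambda_1\lambda_2=-\beta$), I would substitute this into the transform formula \eqref{eq:interpolated_transf_gen} with $k_0=0$, namely $a_{n,k}=\sum_{i=0}^{k}\binom{k}{i}u^iv^{k-i}a_{n-k+i,0}$. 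The inner binomial sum collapses by the binomial theorem, giving the product form
\begin{equation*}
a_{n,k}=A\,\lambda_1^{\,n-k}(u\lambda_1+v)^k+B\,\lambda_2^{\,n-k}(u\lambda_2+v)^k .
\end{equation*}

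Next I would specialise to the column sequence by setting $n=2k+\ell$, so that $n-k=k+\ell$ and
\begin{equation*}
c_k=a_{2k+\ell,k}=\bigl(A\lambda_1^{\ell}\bigr)\rho_1^{\,k}+\bigl(B\lambda_2^{\ell}\bigr)\rho_2^{\,k},\qquad \rho_j:=\lambda_j(u\lambda_j+v).
\end{equation*}
The crucial observation is that the bases $\rho_1,\rho_2$ do not depend on $\ell$: the parameter $\ell$ enters only through the coefficients $A\lambda_1^{\ell}$ and $B\lambda_2^{\ell}$, which are constants in $k$. Hence for every $\ell\in\mathbb{Z}$ the sequence $\{c_k\}$ is a linear combination of $\rho_1^{\,k}$ and $\rho_2^{\,k}$, and therefore satisfies one and the same second-order recurrence $c_{k+2}=(\rho_1+\rho_2)c_{k+1}-\rho_1\rho_2 c_k$, whose characteristic polynomial is $(x-\rho_1)(x-\rho_2)$. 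This already delivers the ``same recurrence for all $\ell$'' assertion; it then remains only to identify the two coefficients.

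This identification is a short Vieta-style computation. Using $\lambda_j^2=\alpha\lambda_j+\beta$ I would rewrite $\rho_j=(u\alpha+v)\lambda_j+u\beta$, whence $\rho_1+\rho_2=(u\alpha+v)\alpha+2u\beta=\alpha^2u+\alpha v+2\beta u$, matching the coefficient of $c_{k+1}$ in \eqref{eq:cent}. For the constant term I would expand $\rho_1\rho_2=(u\alpha+v)^2\lambda_1\lambda_2+u\beta(u\alpha+v)(\lambda_1+\lambda_2)+u^2\beta^2$ and substitute $\lambda_1\lambda_2=-\beta$, $\lambda_1+\lambda_2=\alpha$; collecting terms gives $\rho_1\rho_2=\beta\mathcal{B}$ with $\mathcal{B}=u^2\beta-uv\alpha-v^2$, so that $-\rho_1\rho_2=-\beta\mathcal{B}$ is exactly the coefficient of $c_k$, proving \eqref{eq:cent}.

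The one genuine subtlety, and the step I expect to be the main obstacle, is the degenerate case of a repeated root $\lambda_1=\lambda_2=\lambda$ (i.e.\ $\alpha^2+4\beta=0$), where the Binet form carries a factor linear in the index. Carrying $a_m=(A+Bm)\lambda^m$ through the same computation and using the identity $\sum_i i\binom{k}{i}(u\lambda)^iv^{k-i}=ku\lambda(u\lambda+v)^{k-1}$, I would obtain $c_k=(P_0+P_1k)\rho^k$ with the single base $\rho=\lambda(u\lambda+v)$, still independent of $\ell$; such a sequence satisfies the recurrence with the double characteristic root $\rho$, and since $\rho_1=\rho_2=\rho$ forces $\rho_1+\rho_2=2\rho$ and $\rho_1\rho_2=\rho^2$, the same coefficient formulas persist. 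One could instead sidestep all root analysis and try to derive \eqref{eq:cent} purely by combining \eqref{eq:t1} and \eqref{eq:t6}, but that route forces the elimination of an off-column ``half-step'' term $a_{2k+1+\ell,k}$ whose coefficient does not vanish on its own, which is precisely what makes it more cumbersome than the closed-form argument above.
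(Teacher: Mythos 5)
Your proof is correct, but it takes a genuinely different route from the paper. You pass to the Binet closed form of the left diagonal, collapse the transform \eqref{eq:interpolated_transf_gen} by the binomial theorem to get $a_{n,k}=A\lambda_1^{n-k}(u\lambda_1+v)^k+B\lambda_2^{n-k}(u\lambda_2+v)^k$, and read off that every column sequence is a combination of the two $\ell$-independent bases $\rho_j=\lambda_j(u\lambda_j+v)$, after which Vieta gives the coefficients $\rho_1+\rho_2=\alpha^2u+\alpha v+2\beta u$ and $\rho_1\rho_2=\beta\mathcal{B}$. The paper instead does exactly the manipulation you predicted would be cumbersome: it uses \eqref{eq:t4} and \eqref{eq:t1} to eliminate the half-step terms $a_{2k+3+\ell,k+1}$ and $a_{2k+1+\ell,k}$, which forces a division by $u\alpha+v$ and hence a case split into $u\alpha+v\ne0$ and $v=-u\alpha$ (where the column is a geometric progression with ratio $u\beta$). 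Your approach buys uniformity in $\ell$ and makes it conceptually transparent why all columns obey one and the same recurrence — the characteristic roots simply do not see $\ell$ — at the price of a case analysis on the degeneracy of the \emph{original} roots; the paper's approach is root-free and purely recursive, at the price of its own degenerate case and of coefficient algebra that looks unmotivated.

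One small edge case in your degenerate branch deserves a sentence: when $\alpha^2+4\beta=0$ \emph{and} $u\lambda+v=0$ (equivalently $\mathcal{A}=u\alpha+2v=0$, so $\rho=0$), the representation $c_k=(P_0+P_1k)\rho^k$ is not literally valid, because the term $Bku\lambda^{k+\ell+1}(u\lambda+v)^{k-1}$ contributes $k\rho^{k-1}$, which at $k=1$ is not of that form when $\rho=0$. The conclusion still holds trivially there: one checks $c_k=0$ for all $k\ge2$ while both coefficients $\alpha^2u+\alpha v+2\beta u$ and $\beta\mathcal{B}$ vanish, so \eqref{eq:cent} reads $0=0$. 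Adding that one-line remark closes the gap.
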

\begin{proof}
We have to prove that
\begin{equation}\label{eq:diag_a}
a_{{2(k+2)+\ell},{k+2}}=(\alpha^2u+\alpha v+2\beta u)a_{{2(k+1)+\ell},{k+1}} -\beta(u^2\beta -uv\alpha -v^2) a_{2k+\ell,k}. 
\end{equation}	

First, supposing that $u\alpha+v\ne 0$ and recalling \eqref{eq:t4}  and \eqref{eq:t1} we obtain
\begin{eqnarray*}
a_{{2k+3+\ell},{k+1}}&=& (u\alpha+v)a_{{2k+2+\ell},{k}} + u\beta a_{{2k+1+\ell},{k}} \\
 &=& (u\alpha+v)\left(\alpha a_{{2k+1+\ell},{k}} + \beta a_{{2k+\ell},{k}} \right) + u\beta a_{{2k+1+\ell},{k}}\\
 &=& (u\alpha^2+v\alpha+u\beta)\frac{a_{{2k+2+\ell},{k+1}}-u\beta a_{{2k+\ell},{k}}}{u\alpha+v} + (u\alpha+v)\beta a_{{2k+\ell},{k}},
\end{eqnarray*}
furthermore
\begin{eqnarray*}
a_{{2k+4+\ell},{k+2}}&=& (u\alpha+v)a_{{2k+3+\ell},{k+1}} + u\beta a_{{2k+2+\ell},{k+1}} \\
&=& (u\alpha^2+v\alpha+u\beta)\left(a_{{2k+2+\ell},{k+1}}-u\beta a_{{2k+\ell},{k}} \right) + (u\alpha+v)^2 \beta a_{{2k+\ell},{k}}\\
& &\qquad + u\beta a_{{2k+2+\ell},{k+1}} \\
&=& (u\alpha^2+v\alpha+2u\beta)a_{{2k+2+\ell},{k+1}}+ \beta (-u^2\beta +uv\alpha +v^2) a_{{2k+\ell},{k}}. 
\end{eqnarray*}

Second, when  $v=-u\alpha$ the equality \eqref{eq:cent} becomes 
\begin{equation*}
c_{k+2}=2u\beta c_{k+1} -u^2\beta^2 c_{k},
\end{equation*}
or explicitly 
\begin{equation}\label{eq:cen3}
a_{{2(k+2)+\ell},{k+2}}=2u\beta a_{{2(k+1)+\ell},{k+1}} -u^2\beta^2 ca_{{2k+\ell},{k}}.
\end{equation}
Thus, since the sequence is a geometric progression we have
\begin{equation*}
a_{{2(k+2)+\ell},{k+2}}
=u\beta a_{{2(k+1)+\ell},{k+1}}
=u^2\beta^2 a_{{2k+\ell},{k}} ,
\end{equation*}
and clearly \eqref{eq:cen3} holds because it corresponds to the identity
\begin{equation*}
u\beta a_{{2(k+1)+\ell},{k+1}}
=2u\beta a_{{2(k+1)+\ell},{k+1}}
-u\beta a_{{2(k+1)+\ell},{k+1}}.
\end{equation*}
 \end{proof}

\subsection{Explicit formula}

\begin{theorem}\label{th:explicit}
 Let $D=\sqrt{\alpha^2+4\beta}$.  If $D\ne0$, then the explicit formula of $a_{n,k}$   is
	\begin{multline}
	a_{n,k}= \frac{(vD+\alpha v-2\beta u)a_{n,0}+2\beta a_{n,1}}{2 vD}\left(\frac{\beta u-vx_2}{\beta}\right)^k + \\ \frac{(vD-\alpha v +2\beta u)a_{n,0}-2\beta a_{n,1}}{2 vD}\left(\frac{\beta u-vx_1}{\beta}\right)^k,\label{eq:explicit}
	\end{multline}
	where  $x_1=(\alpha+D)/2$, $x_2=(\alpha-D)/2$  and
	\begin{eqnarray}
	a_{n,0} &=& \frac{(D-\alpha)a_{0,0}+2a_{1,0}}{2D}x_1^n+ \frac{(D+\alpha)a_{0,0}-2a_{1,0}}{2D}x_2^n,\label{eq:exp_an0}\\
	a_{n,1} &=& \frac{(D-\alpha)a_{0,0}+2a_{1,0}}{2D}x_1^{n-1}(ux_1+v)+ \frac{(D+\alpha)a_{0,0}-2a_{1,0}}{2D}x_2^{n-1}(ux_2+v).\label{eq:exp_an1}
	\end{eqnarray}
	
If $D=0$ and $\mathcal{A}\ne0$, then  the explicit formula is 
	\begin{equation*}
	a_{n,k}= \left( a_{n,0}+k\left(\frac{\alpha a_{n,1}}{\mathcal{A}} -a_{n,0} \right) \right) 
	{\left( \frac{\mathcal{A}}{\alpha}\right) }^k,
	\end{equation*}
	where 
	\begin{eqnarray}
	a_{n,0} &=& \left(a_{0,0}+n\frac{2a_{1,0}-\alpha a_{0,0}}{\alpha } \right) \left( \frac{\alpha}{2}\right)^n \label{eq:exp_an0_2}\\
	a_{n,1} &=& ua_{n,0}+v \left(a_{0,0}+(n-1)\frac{2a_{1,0}-\alpha a_{0,0}}{\alpha } \right) \left( \frac{\alpha}{2}\right)^{n-1}.\label{eq:exp_an1_2}
	\end{eqnarray}

If $D=0$ and $\mathcal{A}=0$, then 
 \begin{eqnarray}
  a_{n,0} &=& \left(a_{0,0}+n\frac{2a_{1,0}-\alpha a_{0,0}}{\alpha } \right) \left( \frac{\alpha}{2}\right)^n \label{eq:exp_an0_3}\\
  a_{n,1} &=& u\frac{2a_{1,0}-\alpha a_{0,0}}{\alpha }  \left( \frac{\alpha}{2}\right)^{n}.\label{eq:exp_an1_3}
\end{eqnarray}
and  $a_{n,k}=0$, if $k\geq2$. 
\end{theorem}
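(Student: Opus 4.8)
The plan is to exploit relation \eqref{eq:t7}, which---crucially---is a recurrence in the \emph{column} index $k$ with the row index $n$ held fixed. Reading \eqref{eq:t7} as
$$a_{n,k}=\frac{2u\beta-v\alpha}{\beta}\,a_{n,k-1}-\frac{\mathcal{B}}{\beta}\,a_{n,k-2},\qquad (2\le k\le n),$$
I would regard $\{a_{n,k}\}_{k\ge0}$, for each fixed $n$, as a second-order linear homogeneous sequence in $k$ whose coefficients do not depend on $n$. Its two ``initial values'' are $a_{n,0}$ and $a_{n,1}$, which I treat separately since they are themselves governed by the binary recurrence in $n$. The standard Binet-type solution of the $k$-recurrence, with the two constants pinned down by $a_{n,0}$ and $a_{n,1}$, should then reproduce \eqref{eq:explicit} and its degenerate analogues.

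First I would analyze the characteristic equation of the $k$-recurrence, namely
$$\beta t^2-(2u\beta-v\alpha)\,t+\mathcal{B}=0.$$
Writing $x_1,x_2$ for the roots of $x^2-\alpha x-\beta=0$ (so that $x_1+x_2=\alpha$, $x_1x_2=-\beta$ and $x_1-x_2=D$), I would verify that $r_j=(\beta u-vx_j)/\beta$ are exactly the two roots, by checking $r_1+r_2=(2u\beta-v\alpha)/\beta$ and $r_1r_2=\mathcal{B}/\beta$; these are short computations using $\mathcal{B}=u^2\beta-uv\alpha-v^2$. Since $v\ne0$, the roots $r_1,r_2$ coincide precisely when $x_1=x_2$, i.e.\ when $D=0$, which is the natural case split of the theorem.

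For $D\ne0$ the roots are distinct, so $a_{n,k}=P\,r_2^{\,k}+Q\,r_1^{\,k}$ for constants $P,Q$ depending on $n$; solving the $2\times2$ linear system at $k=0,1$ (using $r_2-r_1=vD/\beta$) gives $P+Q=a_{n,0}$ and, after substituting $x_1=(\alpha+D)/2$, exactly the two prefactors displayed in \eqref{eq:explicit}. It then remains only to supply $a_{n,0}$ and $a_{n,1}$: here $a_{n,0}=a_n$ is the binary sequence \eqref{eq:def_binary_seq}, so \eqref{eq:exp_an0} is its ordinary Binet formula, while $a_{n,1}=u\,a_{n,0}+v\,a_{n-1,0}$ by the construction rule \eqref{eq:def_triangle_rule} with $k=1$, which yields \eqref{eq:exp_an1} directly. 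For $D=0$ the repeated root $r_1=r_2$ simplifies, using $\beta=-\alpha^2/4$, to $(u\alpha+2v)/\alpha=\mathcal{A}/\alpha$; when $\mathcal{A}\ne0$ the double-root solution $(C_1+C_2k)(\mathcal{A}/\alpha)^k$ with $C_1,C_2$ read off from $a_{n,0},a_{n,1}$ gives the stated middle formula, and \eqref{eq:exp_an0_2}--\eqref{eq:exp_an1_2} follow from the repeated-root Binet form of $a_n$ together with $a_{n,1}=u\,a_{n,0}+v\,a_{n-1,0}$. When additionally $\mathcal{A}=0$, I would note that $v=-u\alpha/2$ forces both $2u\beta-v\alpha=0$ and $\mathcal{B}=0$, so \eqref{eq:t7} collapses to $a_{n,k}=0$ for $k\ge2$, and \eqref{eq:exp_an0_3}--\eqref{eq:exp_an1_3} are obtained by specializing the same expression for $a_{n,1}$.

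The computations are all elementary; the part demanding the most care is the bookkeeping that matches the two constants $P,Q$ (and their degenerate counterparts $C_1,C_2$) to the precise expressions in \eqref{eq:explicit}, including the crossing of the labels $x_1,x_2$ between the bases and the coefficients. A secondary point worth stating explicitly is why the single recurrence \eqref{eq:t7} governs every case uniformly: because $\beta\ne0$ throughout, the $k$-recurrence is always well defined, and the three branches of the theorem correspond exactly to whether its characteristic roots are distinct, equal and nonzero, or equal and zero.
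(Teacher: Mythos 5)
Your proposal is correct and takes essentially the same route as the paper's own proof: both treat \eqref{eq:t7} as a second-order linear recurrence in $k$ for each fixed $n$, identify the characteristic roots as $(\beta u-vx_j)/\beta$, and determine the Binet-type constants from the initial data $a_{n,0}$ (the Binet formula of \eqref{eq:def_binary_seq}) and $a_{n,1}=ua_{n,0}+va_{n-1,0}$, with the identical three-way case split $D\ne0$; $D=0$, $\mathcal{A}\ne0$ (repeated root $\mathcal{A}/\alpha$); and $D=0$, $\mathcal{A}=0$ (where $\mathcal{B}=0$ collapses \eqref{eq:t7} to $a_{n,k}=0$ for $k\ge2$). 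Your closing observation that the three branches correspond to distinct roots, an equal nonzero root, and an equal zero root is a nice unifying remark, but the substance of the argument coincides with the paper's.
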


\begin{proof}
We suppose that $\alpha^2+4\beta\ne0$. Firstly, we give the explicit form of the elements in case of the main path ($k=0$). As the recursion is binary with coefficient $\alpha$ and $\beta$, then the characteristic equation of \eqref{eq:def_binary_seq} is $x^2-\alpha x-\beta=0$  with roots $x_1\ne x_2$. Consequently, for all $n\geq 0$ the term $a_{n,0}$ is a linear combination of the powers $x_1^n$, $x_2^n$. In order to determine the coefficients of this linear combination, we need to solve the system of equations $px_1^i+qx_2^i=a_{i,0}$, $i=0,1$. From this system we find $p=\frac{a_{1,0}-a_{0,0}x_2}{x_1-x_2}$ and $q=\frac{a_{1,0}x_1-a_{1,0}}{x_1-x_2}$. Thus observing that 
$$x_1=\frac{a+D}{2}, x_2=\frac{a-D}{2}, x_1-x_2=D,$$
where $D=\sqrt{\alpha^2+4\beta}$, we easily find equality \eqref{eq:exp_an0}. 
Moreover $a_{n,1}=ua_{n,0}+va_{n-1,0}$ yields \eqref{eq:exp_an1}.
 
Secondly, we give similarly the explicit formula of the sequence $\{a_{n,k}\}_{k=0}^n$  with initial conditions $a_{n,0}$ and $a_{n,1}$. Because of \eqref{eq:t7}, its characteristic equation is 
$$y^2-\frac{2\beta u-\alpha v}{\beta}y -\frac{\alpha uv-\beta u^2+v^2}{\beta}=0$$
and the roots  are
\begin{equation*}
y_1=\frac{2\beta u-\alpha v+vD}{2\beta}=u + \frac{v(D-\alpha)}{2\beta}= \frac{\beta u-v x_2}{\beta}\quad \text{and}\quad y_2=\frac{\beta u-v x_1}{\beta}.
\end{equation*}

When we consider the case $D^2=\alpha^2+4\beta=0$ and $\mathcal{A}\ne0$ (or, equivalently,  $\beta=-\alpha^2/4$ and $v=-u\alpha/2$) we use the same method adopted before. 
Taking in account that the characteristic equation $x^2-\alpha x-\beta=0$ has the unique root  $x_0=\alpha/2$, we have to solve the system $(p+qi)x_0^i=a_{i,0}$, $i=0,1$, which provides the coefficients $p$, $q$ for $a_{n,0}=(p+qn)x_0^n$ in equality \eqref{eq:exp_an0_2}.
In this case the characteristic equation of sequence $\{a_n^k\}_{k=0}^{n}$ also has just one root $y_0=\mathcal{A}/\alpha\ne0$   and the equations $(\bar{p}+\bar{q}i)y_0^i=a_n^i$, $i=0,1 \text{ and } k$ yield the final formula.  

Finally, we examine the case $D^2=\alpha^2+4\beta=0$ and $\mathcal{A}=0$. Now the equivalent conditions $\beta=-\alpha^2/4$ and $v=-u\alpha/2$ imply that  $\mathcal{B}=0$ and the equality \eqref{eq:exp_an1_2} is simplified into \eqref{eq:exp_an1_3}.
Moreover if $k\geq2$, then the relation \eqref{eq:t7} becomes $a_{n,k}=0\,a_{n,k-1}+0\,a_{n,k-2}=0$.
\end{proof}

\section{Special types of binary binomial interpolated triangles}

The classical Pascal's triangle has vertical symmetry and its inner elements satisfy the well-known rule of addition, namely every elements is the sum of the two terms directly above it. In this section we give the classes of our triangles which have the same properties, and we answer the ``Whether Pascal's triangle is a binomial interpolated triangle?" question.

A binomial interpolated triangle is (vertically) symmetrical if  
\begin{equation*}\label{eq:def_triangle_symmetrical}
 a_{n,k}=a_{n,n-k} \quad  (0\leq k\leq n).
\end{equation*}

\begin{theorem}\label{th:sym}
Let $a_{0,0}\ne0$ and $a_{1,0}$ be given.
A binary binomial interpolated triangle is symmetrical if and only if 
$$u=-1,\  v=\alpha=\frac{2a_{1,0}}{a_{0,0}}$$
or 
$$\alpha=\frac{2a_{1,0}}{a_{0,0}},\  \beta=-\frac{\alpha^2}{4},\  v=-\frac{\alpha(u-1)}{2},$$
where $u\ne1$ or 
$$u=-1,\  v=\alpha=\frac{2a_{1,0}}{a_{0,0}},\  \beta=-\frac{\alpha^2}{4}.$$
(See \cite{Barbero} for the binomial transform in a special case). 
\end{theorem}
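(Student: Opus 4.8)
The plan is to turn the symmetry requirement into a closed form for $a_{n,k}$ and then impose invariance under $k\mapsto n-k$. I would open with the reflection viewpoint: setting $a'_{n,k}=a_{n,n-k}$, the right-to-left rule established in the proof of Theorem~\ref{th:a_nk_nn} shows that $a'$ is itself a binomial interpolated triangle, but with parameters $(1/u,-v/u)$ and left diagonal $\{b_n\}$. Hence the triangle is symmetric precisely when $a=a'$, i.e.\ when $a$ simultaneously obeys the $(u,v)$-rule and the $(1/u,-v/u)$-rule. Subtracting the two rules gives, at every entry,
\[
(u+1)\bigl[(u-1)a_{n,k-1}+v\,a_{n-1,k-1}\bigr]=0,
\]
together with the left-equals-right-diagonal condition $a_n=b_n$. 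This already cleanly splits the problem into the case $u=-1$ and a ``geometric'' case in which the bracket vanishes; taking the entry $k=1$ there forces $(u-1)a_n+v a_{n-1}=0$, so $\{a_n\}$ is a geometric progression whose ratio is a characteristic root of $x^2-\alpha x-\beta$.

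For the quantitative conditions I would rely on Theorem~\ref{th:explicit}. When $D\ne0$ the decisive simplification is that, since $x_1x_2=-\beta$, the roots of the $k$-recurrence reduce to $y_i=(ux_i+v)/x_i=u+v/x_i$; matching the relation $a_{n,1}=ua_{n,0}+va_{n-1,0}$ against the general combination $\sum_{i,j}C_{ij}x_i^{\,n}y_j^{\,k}$ then annihilates the cross terms and produces the diagonal closed form
\[
a_{n,k}=c_1x_1^{\,n}y_1^{\,k}+c_2x_2^{\,n}y_2^{\,k},
\]
with $c_1,c_2$ the coefficients appearing in \eqref{eq:exp_an0}. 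Symmetry now reads $c_1x_1^{\,n}(y_1^{\,k}-y_1^{\,n-k})+c_2x_2^{\,n}(y_2^{\,k}-y_2^{\,n-k})=0$ for all $0\le k\le n$. Treating the two summands as independent exponential modes in $(n,k)$, the substitution $k\mapsto n-k$ must permute them: the identity permutation would need $y_1=y_2=1$, impossible once $x_1\ne x_2$ with $c_1c_2\ne0$, while the transposition forces $y_1y_2=1$ (equivalently $\mathcal{B}=\beta$), $c_1=c_2$, and $x_2=ux_1+v$, $x_1=ux_2+v$. Subtracting the last two equations yields $(u+1)(x_1-x_2)=0$, hence $u=-1$, then $v=x_1+x_2=\alpha$, and $c_1=c_2$ is exactly $\alpha=2a_{1,0}/a_{0,0}$. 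This is the first listed family.

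The degenerate root case $D=0$ I would handle separately with the confluent formula of Theorem~\ref{th:explicit}, where $a_{n,0}=(p+qn)(\alpha/2)^n$ and $a_{n,k}=\bigl(a_{n,0}+k(\alpha a_{n,1}/\mathcal{A}-a_{n,0})\bigr)(\mathcal{A}/\alpha)^k$. Imposing $a_{n,k}=a_{n,n-k}$ on this linear-times-geometric shape forces the linear factor in $k$ to be constant and the base $\mathcal{A}/\alpha$ to be compatible with $k\mapsto n-k$; this unwinds to $\alpha=2a_{1,0}/a_{0,0}$ (so the $qn$ term drops and $\{a_n\}$ collapses to $a_{0,0}(\alpha/2)^n$), $\beta=-\alpha^2/4$, and $v=-\alpha(u-1)/2$, which is the second family, its overlap with $u=-1$ (the $\mathcal{A}=0$ sub-case) being exactly the third. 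For sufficiency I would verify each family directly: in the first, $u=-1$ makes the two construction rules coincide while $v=\alpha=2a_{1,0}/a_{0,0}$ gives $\mathcal{A}=\alpha$, $\mathcal{B}=\beta$ and $b_1=a_1$, so $b_n=a_n$ and $a=a'$; in the second and third, the choice $v=-\alpha(u-1)/2$ makes every row constant (each entry equals $a_{0,0}(\alpha/2)^n$), which is visibly symmetric.

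The step I expect to be the main obstacle is the necessity argument in the boundary regimes. The clean mode-matching is legitimate only when both exponential modes are genuinely present and distinct ($c_1c_2\ne0$ and $y_1\ne y_2$, the latter automatic once $D\ne0$); if a characteristic coefficient vanishes the diagonal form degenerates to a single geometric mode, and if $D=0$ or $\mathcal{A}=0$ one must reason with the confluent polynomial-times-exponential form instead of with linear independence of distinct exponentials. Making the ``independent modes'' step rigorous — for instance by evaluating the symmetry identity at enough pairs $(n,k)$ to invoke a Vandermonde-type nonvanishing — and then reconciling the confluent boundary so that the three stated families exhaust precisely the symmetric triangles, is where the real care is required.
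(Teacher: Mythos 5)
Your reduction --- symmetry holds iff the array obeys both the $(u,v)$-rule and the reflected $(1/u,-v/u)$-rule and has equal legs --- is correct, and it is a genuinely different (and more careful) route than the paper's: the paper merely observes that the two legs, being equal, satisfy both binary recurrences, then ``clearly'' equates coefficients ($\alpha=\mathcal{A}$, $\beta=\mathcal{B}$, $b_1=a_{1,0}$) and solves. Your main-case analysis ($D\ne0$, $c_1c_2\ne0$, giving $u=-1$, $v=\alpha=2a_{1,0}/a_{0,0}$) and your sufficiency checks are sound.

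The genuine gap is the boundary case you defer to the last paragraph, and it is fatal rather than technical: it cannot be ``reconciled so that the three stated families exhaust precisely the symmetric triangles,'' because they do not exhaust them. Your own Case-A computation already shows this: for $u\ne\pm1$, symmetry forces $(u-1)a_n+va_{n-1}=0$, i.e.\ $a_n=a_{0,0}\rho^n$ with $\rho=v/(1-u)$ a characteristic root; and conversely, whenever $a_{1,0}/a_{0,0}=\rho$ is \emph{any} root of $x^2-\alpha x-\beta$ and $v=(1-u)\rho$, induction on \eqref{eq:def_triangle_rule} gives $a_{n,k}=a_{0,0}\rho^n$ for every $k$, so all rows are constant and the triangle is symmetric --- with no need of $u=-1$ or of $\beta=-\alpha^2/4$. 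The paper's second family is exactly this situation when $\rho$ is the double root ($D=0$); the simple-root instances ($D\ne0$, your case $c_2=0$, $y_1=1$) are missing from the statement. Concretely, \BT$(1,2,3,-2;2,-2)$ has $a_n=2^n$ (here $x_1=2$ is a simple root of $x^2-3x+2$ and $v=(1-u)x_1=-2$), every entry of row $n$ equals $2^n$, hence the triangle is symmetric, yet it satisfies none of the three listed families ($u\ne-1$, $\beta\ne-\alpha^2/4$). So carrying out your deferred step honestly would disprove the statement as written rather than complete its proof; the correct classification needs the additional family ``$a_{1,0}/a_{0,0}$ equal to a characteristic root $\rho$ and $v=(1-u)\rho$.'' For what it is worth, the paper's own proof stumbles at exactly the same point: equating $\alpha=\mathcal{A}$ and $\beta=\mathcal{B}$ presupposes that the common leg sequence determines its binary recurrence uniquely, which fails precisely when that sequence is geometric.
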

\begin{proof}
Indeed clearly the condition $a_{n,k}=a_{n,n-k}$ implies the following system of equations
$$\begin{cases}
\alpha = \mathcal{A}=u\alpha+2v\\
\beta = \mathcal{B}=u^2\beta-uv\alpha-v^2\\
b_1=a_{1,0}=ua_{1,0}+va_{0,0}
\end{cases}.$$

From the first equation we find $v=-\alpha(u-1)/2$  and substituting in the second equation we obtain, with some calculations,
$$(\alpha^2+4\beta)(u+1)(u-1)=0.$$

Now, we have the three possibilities $\alpha^2+4\beta=0$, $u=-1$, $u=1$.
Obviously the case $u=1$ implies $v=0$, a contradiction. The case $u=-1$ gives $v=\alpha$ and  $\alpha=2a_{1,0}/a_{0,0}$.
Considering the case $\alpha^2+4\beta=0$ we obtain the second part of the statement of the theorem. Finally, the equalities $\alpha^2+4\beta=0$ and $u=-1$ yield the third last case. 
\end{proof}

\begin{corollary}
Let $\lambda =a_{1,0}/a_{0,0}$ and  $a_{0,0}\ne0$, $u\ne 1$. Then the form of a symmetrical binomial interpolated triangle can be written by 
\begin{equation}\label{tri:sym}
\text{\BT}(a_0,\lambda a_0,2\lambda,\beta;-1,2\lambda),
\end{equation}
\begin{equation}\label{tri:sym2}
\text{\BT}(a_0,\lambda a_0,2\lambda,-\lambda^2;u,\lambda(1-u)),
\end{equation}
or
\begin{equation}\label{tri:sym3}
\text{\BT}(a_0,\lambda a_0,2\lambda,-\lambda^2;-1,2\lambda).
\end{equation}
\end{corollary}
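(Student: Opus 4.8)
The plan is to read off each of the three cases characterised in Theorem~\ref{th:sym} and rewrite them in the compact \BT notation, using the substitution $\lambda = a_{1,0}/a_{0,0}$. The common feature of all three cases is the relation $\alpha = 2a_{1,0}/a_{0,0}$, which under the hypothesis $a_{0,0}\ne 0$ gives at once $\alpha = 2\lambda$ and $a_1 = a_{1,0} = \lambda a_{0,0} = \lambda a_0$. These two identities already fix the first three entries of the symbol \BT$(a_0, a_1, \alpha, \beta; u, v)$ in every case, so only $\beta$, $u$ and $v$ remain to be determined.

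For the first case ($u=-1$, $v=\alpha=2\lambda$, with $\beta$ free) I would simply collect the parameters: the initial pair is $(a_0, \lambda a_0)$, the recurrence coefficients are $2\lambda$ and $\beta$, and the interpolation parameters are $u=-1$, $v=2\lambda$, which is precisely \eqref{tri:sym}. For the second case ($\alpha = 2\lambda$, $\beta = -\alpha^2/4$, $v = -\alpha(u-1)/2$, $u\ne 1$) the only arithmetic needed is the substitution of $\alpha = 2\lambda$, yielding $\beta = -(2\lambda)^2/4 = -\lambda^2$ and $v = -2\lambda(u-1)/2 = \lambda(1-u)$; collecting terms gives \eqref{tri:sym2}. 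The third case merges the constraints $u=-1$, $v=\alpha=2\lambda$ with $\beta = -\alpha^2/4 = -\lambda^2$, producing \eqref{tri:sym3} in the same fashion.

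No genuine obstacle arises, since the corollary is a verbatim reformulation of Theorem~\ref{th:sym} in the $(a_0,a_1,\alpha,\beta;u,v)$ shorthand; the work is purely notational. The standing assumption $a_{0,0}\ne 0$ is what makes $\lambda$ well-defined, and the restriction $u\ne 1$ is inherited from the second case of the theorem, where the value $u=1$ was already discarded as forcing the contradiction $v=0$.
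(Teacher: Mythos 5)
Your proposal is correct and is exactly the argument the paper intends: the corollary is stated without proof as an immediate consequence of Theorem~\ref{th:sym}, obtained by substituting $\alpha = 2\lambda$, $a_1 = \lambda a_0$ into each of the three cases, and your arithmetic ($\beta = -(2\lambda)^2/4 = -\lambda^2$, $v = -2\lambda(u-1)/2 = \lambda(1-u)$) checks out.
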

Figure~\ref{fig:Triangle_from_symmetrical} shows a symmetrical binomial interpolated triangle, namely \BT$(2,1,1,1;-1,1)$ generated by Lucas numbers ({A000032} in \cite{Sloane}).

\begin{figure}[!ht]
	\centering
		\scalebox{0.99}{ \begin{tikzpicture}[->,xscale=0.8,yscale=0.7, auto,swap]
			\node(a00) at (0,0)    {2};
			
			\node (a01) at (-0.5,-1)   {1};
			\node (a02) at (0.5,-1)   {1};
			
			\node (a01) at (-1,-2)   {3};
			\node (a02) at (0,-2)   {$-2$};	
			\node (a01) at (1,-2)   {3};
			
			\node (a02) at (-1.5,-3)   {4};		
			\node (a01) at (-0.5,-3)   {$-1$};
			\node (a01) at (0.5,-3)   {$-1$};
			\node (a02) at (1.5,-3)   {4};	
			
			\node (a01) at (-2,-4)   {7};
			\node (a01) at (-1,-4)   {$-3$};
			\node (a01) at (0,-4)   {2};
			\node (a02) at (1,-4)   {$-3$};
			\node (a02) at (2,-4)   {7};
			
			\node (a01) at (-2.5,-5)   {11};		
			\node (a01) at (-1.5,-5)   {$-4$};
			\node (a02) at (-0.5,-5)   {1};
			\node (a02) at (0.5,-5)   {1};
			\node (a01) at (1.5,-5)   {$-4$};
			\node (a01) at (2.5,-5)   {11};
			
			\node (a01) at (-3,-6)   {18};
			\node (a02) at (-2,-6)   {$-7$};
			\node (a02) at (-1,-6)   {3};
			\node (a01) at (0,-6)   {$-2$};
			\node (a01) at (1,-6)   {3};
			\node (a02) at (2,-6)   {$-7$};
			\node (a01) at (3,-6)   {18};
			
			\end{tikzpicture}}
		\caption{Symmetrical binomial interpolated triangle \BT$(2,1,1,1;-1,1)$}
		\label{fig:Triangle_from_symmetrical}
\end{figure}

\begin{remark}
According to \eqref{eq:sumss} the recursion of $s$ in the symmetrical binomial interpolated triangles case is $s_n= 2\lambda s_{n-1}+ \beta s_{n-2}$ or $s_n= 2\lambda s_{n-1}-\lambda^2 s_{n-2}$. Moreover, when we consider the triangle \eqref{tri:sym3} for the sequence $d$ we obtain $d_n=\lambda d_{n-1}+\lambda d_{n-2}-\lambda^2 d_{n-3}$.
\end{remark}

\begin{theorem}
Pascal's triangle is not a binomial interpolated triangle.
\end{theorem}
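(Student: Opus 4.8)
The plan is to argue by contradiction using the two structural ingredients of a \BT: that its left diagonal is a binary recurrence sequence and that its interior obeys the fixed generating rule \eqref{eq:def_triangle_rule}. Suppose Pascal's triangle were a binary binomial interpolated triangle for some parameters $(a_0,a_1,\alpha,\beta;u,v)$. Since both arrays are triangular with $n+1$ entries in row $n$, and since Pascal's rows are palindromes (so the global left--right orientation built into the definition is immaterial), this assumption means precisely that $a_{n,k}=\binom{n}{k}$ for all $0\le k\le n$. I would first read off the left diagonal: $a_{n,0}=\binom{n}{0}=1$ for every $n$, so the underlying sequence $\{a_n\}$ is the constant sequence $1$. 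In particular $a_0=a_1=1$, and evaluating the recurrence at $n=2$ forces $\alpha+\beta=1$, which is perfectly consistent with $\alpha\beta\ne0$. Hence the left diagonal alone yields no contradiction, and the obstruction must be sought in the interior.

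Next I would apply the generating rule \eqref{eq:def_triangle_rule} along the column $k=1$. Because $a_{n,0}=1$ for all $n$, it gives
\[
a_{n,1}=u\,a_{n,0}+v\,a_{n-1,0}=u+v\qquad (n\ge 1),
\]
a value independent of $n$. The corresponding Pascal entries are $a_{n,1}=\binom{n}{1}=n$, which are not constant. Comparing rows $n=1$ and $n=2$ yields $u+v=\binom{1}{1}=1$ and $u+v=\binom{2}{1}=2$ simultaneously, an outright contradiction. Therefore no choice of parameters reproduces Pascal's triangle, proving the statement.

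The one point I would state carefully rather than grind through is the reduction to the identification $a_{n,k}=\binom{n}{k}$: the triangular shape maps row $n$ of the \BT onto row $n$ of Pascal, entries must match in order up to the reflection already present in the definition, and each Pascal row being a palindrome makes the two orientations coincide. I do not expect any genuine obstacle here, since the argument reduces to the transparent observation that the second column of any \BT is the constant $u+v$ whenever its first column is constant, whereas Pascal's second column is the unbounded sequence $1,2,3,\dots$. As an independent check one could instead invoke the symmetry classification of Theorem~\ref{th:sym}: Pascal is vertically symmetric with $a_{0,0}=1\ne0$, which would force $\alpha=2a_{1,0}/a_{0,0}=2$ and, together with the constant left diagonal, $\beta=-1$, hence $\alpha^2+4\beta=0$; a short computation in that degenerate family again fails to produce the binomial coefficients. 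The direct contradiction above is the cleaner route, so I would present it as the main proof.
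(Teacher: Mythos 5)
Your proof is correct, and it takes a genuinely different and simpler route than the paper's. After the identification $a_{n,k}=\binom{n}{k}$ (forced up to reflection, which is harmless since Pascal's rows are palindromes, as you note), you apply the defining rule \eqref{eq:def_triangle_rule} along the column $k=1$: a constant left diagonal forces $a_{n,1}=u+v$ for every $n\ge 1$, while Pascal demands $a_{1,1}=1$ and $a_{2,1}=2$, an immediate contradiction. The paper argues quite differently: it imposes Pascal's ``sum of the two entries above'' property through relation \eqref{eq:t2}, obtaining $u\beta/v=1$ and $-\mathcal{B}/v=1$, combines this with the vertical-symmetry classification of Theorem~\ref{th:sym} (which gives $\beta=\mathcal{B}$, hence $u=-1$, $v=-\beta$), and then eliminates the resulting parameter families case by case. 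Your argument buys two things: it is more elementary, using nothing beyond the generating rule \eqref{eq:def_triangle_rule}, and it actually proves a stronger statement --- Pascal's triangle is not even a general (non-binary) binomial interpolated triangle in the sense of Section 2, since the binary recurrence on the left diagonal (your observation $\alpha+\beta=1$) is never needed for the contradiction, nor is the restriction $uv\ne 0$. What the paper's longer route buys is expository: it showcases the section's structural machinery (the symmetry classification and the downward-sum conditions) on a concrete question, though as written it carries small slips (``$a\beta/v$'' for $u\beta/v$, and a sign in ``$v=\beta=\alpha=2$'', where $v=-\beta$ in fact gives $\beta=-2$, consistent with the recurrence $a_{n,0}=2a_{n-1,0}-2a_{n-2,0}$ it then refutes). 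Your column-$1$ argument is the cleaner proof; your fallback via Theorem~\ref{th:sym} is closer in spirit to the paper's but is not needed.
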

\begin{proof}
The ``sum of above terms" condition and relation \eqref{eq:t2} should imply $a\beta/v=-\mathcal{B}/v=1$ and the vertical symmetry implies $\beta=\mathcal{B}$. Thus we find $u=-1$ and $v=-\beta$. Now the first set of solutions coming from Theorem~\ref{th:sym} gives $v=\beta=\alpha=2$, since in Pascal's triangle we have $a_{0,0}=a_{1,0}=1$. The recurrence \eqref{eq:def_binary_seq} becomes $a_{n,0}=2a_{n-1,0}-2a_{n-2,0}$ which is clearly not satisfied in Pascal's triangle by the first elements (all equal to 1) of any row. From the second set of solutions we find $\alpha=2$, $\beta=-1$, $v=1-u$, but, since $u=-1$, from these solutions we must have $v=2$ and on the other hand since $v=-\beta=-1$ we find a contradiction.
\end{proof}

Now we give the conditions of $u$ and $v$, so that an inner entry of \BT\ could be the  sum of values not only left but also directly above it by coefficients $u$ and $v$. Using  \eqref{eq:t2}, we can gain it in two different ways (see Figure~\ref{fig:sum}). Our cases are 
\begin{alignat*}{2}
\text{\textbf{case 1:}}\qquad \qquad & u=\frac{u\beta}{v}, \qquad  & v =-\frac{\mathcal{B}}{v},\\
\text{\textbf{case 2:}}\qquad \qquad   & v=\frac{u\beta}{v}, &u =-\frac{\mathcal{B}}{v}.	
\end{alignat*}

\begin{figure}[!ht]
	\centering 
	\scalebox{0.90}{ 		         
		\begin{tikzpicture}[->,xscale=1.7,yscale=1.3, auto,swap]

		\node (c0) at (-2,0)   {$a_{n-1,k-1} $}; 
		\node (c1) at (-1,0)   {$a_{n-1,k} $}; 
		\node (c2) at (-1.5,-1)   {{$a_{n,k} $}};
		\path (c0) edge node [right]{$v$} (c2);
		\path (c1) edge node [right]{$u$} (c2);

		\node (c0) at (1,0)   {$a_{n-1,k-1} $}; 
		\node (c1) at (2,0)   {$a_{n-1,k} $}; 
		\node (c2) at (1.5,-1)   {{$a_{n,k} $}};
		\path (c0) edge node [right]{$u$} (c2);
		\path (c1) edge node [right]{$v$} (c2);
		\end{tikzpicture}}
	\caption{Coefficients of summing downwards}
	\label{fig:sum}
\end{figure}
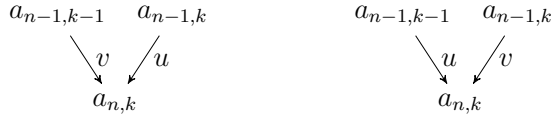

\subsection{Case 1}

Solving the system 
$$
\begin{cases}
u=\frac{u\beta}{v}\\
v =-\frac{\mathcal{B}}{v}
\end{cases},
$$
we can easily obtain that $u=\alpha$ and $v=\beta$, thus the triangle is 
\begin{equation}\label{tri:case1}
	\text{\BT}(a_0,a_1,\alpha,\beta;\alpha,\beta).
\end{equation}

We derive some interesting properties of this triangle. From Figure~\ref{fig:Triangle_alpha_beta}, which shows the first four rows of triangle \BT$(a_0,a_1,\alpha,\beta;\alpha,\beta)$ (see also Figure~\ref{fig:same}), we notice that the rows and the left diagonal satisfy the same recurrence. 
Indeed, using the equalities $u=\alpha$, $v=\beta$, the recurrence relation \eqref{eq:t7} becomes  $a_{n,k} = \alpha a_{{n},{k-1}}+{\beta}a_{{n},{k-2}}$. 
 Moreover, the terms along the rising diagonals are the same. The next theorem and corollary will provide it precisely.

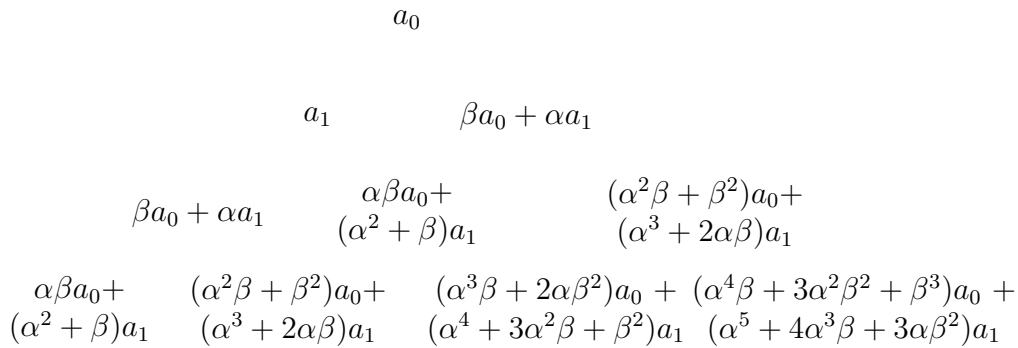
\begin{figure}
	\centering 
	\scalebox{0.99}
	{ \begin{tikzpicture}[->,xscale=4,yscale=1.3, auto,swap]
		\node(a00) at (0.,0)    {$a_0$};
		
		\node (a01) at (-0.3,-1)   {$a_1$};
		\node (a02) at (0.4,-1)   {$\beta a_0+\alpha a_1$};
		
		\node (a01) at (-0.7,-2)   {$\beta a_0+\alpha a_1$};
		\node (a02) at (0.,-2)  [ align=center]  {$\alpha \beta a_0 $+\\ $({\alpha}^2+\beta)a_1$};	
		\node (a01) at (1,-2) [ align=center]  {$({\alpha}^2 \beta+{\beta}^2) a_0 $+\\ $({\alpha}^3+2\alpha\beta)a_1$};
		
		\node (a02) at (-1.1,-3)  [ align=center] {$\alpha \beta a_0 $+\\ $ ({\alpha}^2+\beta)a_1$};	
		\node (a01) at (-0.4,-3)  [ align=center]  {$({\alpha}^2 \beta+{\beta}^2) a_0 $+\\ $({\alpha}^3+2\alpha\beta)a_1$};
		\node (a01) at (0.5,-3)  [ align=center] {$({\alpha}^3 \beta+2\alpha{\beta}^2) a_0 $ +\\ $ ({\alpha}^4+3\alpha^2\beta+{\beta}^2)a_1$};
		\node (a02) at (1.5,-3)  [ align=center] {$({\alpha}^4\beta + 3\alpha^2{\beta}^2+{\beta}^3) a_0 $ +\\ $ ({\alpha}^5+ 4\alpha^3\beta+ 3\alpha{\beta}^2)a_1$};
		
		\end{tikzpicture}}
	\caption{\BT$(a_0,a_1,\alpha,\beta;\alpha,\beta)$}
	\label{fig:Triangle_alpha_beta}
\end{figure}

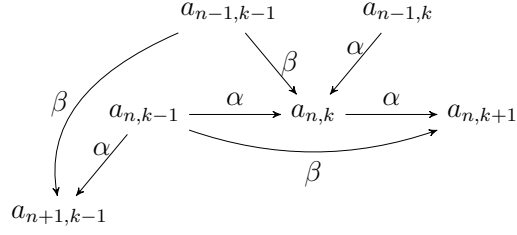
\begin{figure}[!thbp]
	\centering 
	\scalebox{0.90}{ 		         
		\begin{tikzpicture}[->,xscale=2.5,yscale=1.5, auto,swap]

		\node (a0) at (-0.5,1)   {$a_{n-1,k-1} $}; 
		\node (a00) at (0.5,1)   {$a_{n-1,k} $}; 		
		\node (a1) at (-1,0)   {$a_{n,k-1} $}; 
		\node (a2) at (-1.5,-1)   {$a_{n+1,k-1} $}; 
		\node (b) at (0,0)   {{$a_{n,k} $}};
		\node (c) at (1,0)   {{$a_{n,k+1} $}};		
		
		\path (a1) edge node [above]{$\alpha$} (a2);
		\path (a0) edge node [right]{$\beta$} (b);
		\path (a1) edge node [above]{$\alpha$} (b);
        \path (a0) edge [bend right] node [left]{$\beta$} (a2);
		\path (a00) edge node [above]{$\alpha$} (b);
		\path (b) edge node [above]{$\alpha$} (c);	
        \path (a1) edge [bend right] node [below]{$\beta$} (c);		
		\end{tikzpicture}}
	\caption{Coefficients of summing in case 1}
	\label{fig:same}
\end{figure}

\begin{theorem} \label{th:ank_binom}
The  entries in  triangle \BT$(a_0,a_1,\alpha,\beta;\alpha,\beta)$ can be written ($n\geq 1$) by
\begin{equation*}
a_{n,k} = a_0 \sum_{i=0}^{\lfloor \frac{n+k-2}{2}\rfloor} \binom{n+k-i-2}{i} \alpha^{n+k-2i-2}\beta^{i+1} + a_1 \sum_{i=0}^{\lfloor \frac{n+k-1}{2} \rfloor}\binom{n+k-i-1}{i} \alpha^{n+k-2i-1}\beta^{i}. 
\end{equation*}
\end{theorem}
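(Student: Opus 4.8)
The plan is to observe first that in this triangle every entry depends only on the sum of its two indices: I claim $a_{n,k}=a_{n+k}$, where $a_m$ denotes the underlying binary sequence of \eqref{eq:def_binary_seq}. Granting this, the asserted identity is just the classical explicit expansion of a second-order linear recurrence, read at index $m=n+k$; indeed the right-hand side of the statement depends on $n$ and $k$ only through $n+k$.

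To prove $a_{n,k}=a_{n+k}$, I would induct on $k$. The base cases are $a_{n,0}=a_n$ by definition and $a_{n,1}=u\,a_{n,0}+v\,a_{n-1,0}=\alpha a_n+\beta a_{n-1}=a_{n+1}$, using the triangle rule \eqref{eq:def_triangle_rule} with $u=\alpha$, $v=\beta$ together with \eqref{eq:def_binary_seq}. For the inductive step I would invoke the row recurrence recorded just before the theorem: with $u=\alpha$, $v=\beta$ relation \eqref{eq:t7} collapses to $a_{n,k}=\alpha a_{n,k-1}+\beta a_{n,k-2}$, so that $a_{n,k}=\alpha a_{n+k-1}+\beta a_{n+k-2}=a_{n+k}$, the last equality being \eqref{eq:def_binary_seq} itself. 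This is the conceptual crux, and it is short.

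It then remains to establish, for $m\geq1$, the closed form $a_m=a_0\,Q(m)+a_1\,P(m)$, where $P(m)=\sum_i\binom{m-i-1}{i}\alpha^{m-2i-1}\beta^{i}$ and $Q(m)=\sum_i\binom{m-i-2}{i}\alpha^{m-2i-2}\beta^{i+1}$ are exactly the two displayed sums at $m=n+k$. I would argue by induction on $m$, checking the base values $a_1=a_1$ and $a_2=\beta a_0+\alpha a_1$ directly, and then showing that each coefficient sequence satisfies the same recurrence, $P(m)=\alpha P(m-1)+\beta P(m-2)$ and $Q(m)=\alpha Q(m-1)+\beta Q(m-2)$. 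Each follows from one application of Pascal's rule $\binom{r}{i}=\binom{r-1}{i}+\binom{r-1}{i-1}$, after which the sum splits into two pieces and the reindexing $i\mapsto i-1$ in the second piece produces precisely the $\beta$-term. Since $a_0Q(m)+a_1P(m)$ then inherits the recurrence and matches $a_m$ at $m=1,2$, it equals $a_m$ for all $m\geq1$. Alternatively, and more cleanly, one may read $P$ and $Q$ off in one stroke from the generating function $\sum_m a_m x^m=\bigl(a_0(1-\alpha x)+a_1 x\bigr)/(1-\alpha x-\beta x^2)$, expanding $1/(1-\alpha x-\beta x^2)=\sum_j(\alpha x+\beta x^2)^j$ and collecting $[x^m]$ via the binomial theorem.

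The main obstacle is not conceptual but a matter of bookkeeping: one must check that the upper limits $\lfloor(m-1)/2\rfloor$ and $\lfloor(m-2)/2\rfloor$ line up correctly after the Pascal split and reindexing, and that the vanishing boundary binomials do not corrupt the shifted ranges for small $m$. Managing these edge terms is the only delicate point, and the generating-function route avoids it entirely.
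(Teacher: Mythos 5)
Your proof is correct, and it takes a genuinely different route from the paper's. The paper specializes its general explicit formula (Theorem~\ref{th:explicit}) to $u=\alpha$, $v=\beta$, reduces it to the Binet-type expression \eqref{eq:Gir_0}, and then invokes the Girard--Waring formula to produce the binomial sums, treating the repeated-root case $D=0$ by a limiting argument. You instead prove the structural identity $a_{n,k}=a_{n+k}$ (entries depend only on the sum of the indices) by induction on $k$, using the triangle rule \eqref{eq:def_triangle_rule} for the base cases and the specialized row recurrence $a_{n,k}=\alpha a_{n,k-1}+\beta a_{n,k-2}$ coming from \eqref{eq:t7} (valid for $2\le k\le n$, which is why your two base cases $k=0,1$ are needed) for the step; you then verify the classical binomial closed form for a binary recurrence at index $m=n+k$, either by a Pascal's-rule induction or from the generating function $\bigl(a_0(1-\alpha x)+a_1x\bigr)/(1-\alpha x-\beta x^2)$. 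Both steps are sound; the only delicate point, as you yourself flag, is the alignment of summation limits after the Pascal split, and the generating-function variant indeed dispatches it, since expanding $1/(1-\alpha x-\beta x^2)=\sum_j(\alpha x+\beta x^2)^j$ gives $[x^N]=\sum_i\binom{N-i}{i}\alpha^{N-2i}\beta^i$ with all range issues absorbed by vanishing binomial coefficients. What your approach buys: it is entirely elementary (no characteristic roots), it needs no case distinction between $D\ne0$ and $D=0$, and it makes transparent why the right-hand side depends on $n$ and $k$ only through $n+k$, so the paper's subsequent corollary on the constancy of rising diagonals falls out for free. What the paper's approach buys: it reuses the already-established Theorem~\ref{th:explicit} and outsources the binomial expansion to the cited Girard--Waring identity, which your Pascal induction in effect reproves from scratch.
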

\begin{proof}
We consider the results of Theorem~\ref{th:explicit}. Indeed, using the equalities $u=\alpha$, $v=\beta$ and $x_1+x_2=\alpha$, $\alpha x_1+\beta=x_1^2$, $\alpha x_2+\beta=x_2^2$, $D+\alpha=2x_1$, $D-\alpha=-2x_2$ the equations \eqref{eq:explicit}--\eqref{eq:exp_an1} become 
\begin{equation*}
a_{n,k}= \frac{-a_{n,0}x_2+a_{n,1}}{D}x_1^k + \frac{a_{n,0}x_1- a_{n,1}}{D}x_2^k,
\end{equation*}
where  $D=\sqrt{\alpha^2+4\beta}=x_1-x_2\ne 0$  and
\begin{eqnarray*}
a_{n,0} &=& \frac{-a_{0}x_2+a_{1}}{D}x_1^n + \frac{a_{0}x_1-a_{1}}{D}x_2^n,\\
a_{n,1} &=& \frac{-a_{0}x_2+a_{1}}{D}x_1^{n+1} + \frac{a_{0}x_1-a_1}{D}x_2^{n+1}.
\end{eqnarray*}
A little calculation shows that 
\begin{equation}\label{eq:Gir_0}
a_{n,k}=a_0\beta\left(\frac{x_1^{n+k-1}-x_2^{n+k-1}}{x_1-x_2}\right)
+a_1\left(\frac{x_1^{n+k}-x_2^{n+k}}{x_1-x_2}\right).
\end{equation}
Using the Girard-Waring formula \cite{G}
\begin{equation*}
\frac{X^{N+1}-Y^{N+1}}{X-Y} = \sum_{i=0}^{\lfloor\frac{N}{2}\rfloor} (-1)^i \binom{N-i}{i} (X+Y)^{N-2i}(XY)^i,
\end{equation*}
where, in our case, $X=x_1$, $Y=x_2$, $X+Y=\alpha$, $XY=-\beta$ and $N=n+k-2$ or $N=n+k-1$, the thesis follows. The Girard-Waring formula also holds in the case $x_1=x_2$, i.e., $D=0$, taking the limit $x_1 \rightarrow x_2$ on both members. We mention that $D=0$ implies $\mathcal{A}\ne0$.
\end{proof}

\begin{corollary}
	In case of the triangle \BT$(a_0,a_1,\alpha,\beta;\alpha,\beta)$ the  rising diagonal sequence $\{a_{n-k,k}\}_{k=0}^{\lfloor\frac{n}{2}\rfloor}$ is a constant sequence for any $k$.
\end{corollary}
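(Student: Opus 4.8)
The plan is to read the result directly off the explicit formula already established for this triangle, rather than re-deriving any recurrence. The key observation is that both expressions for $a_{n,k}$ obtained in the course of proving Theorem~\ref{th:ank_binom} depend on the index pair $(n,k)$ only through the sum $n+k$. This is transparent in equation \eqref{eq:Gir_0},
$$a_{n,k}=a_0\beta\,\frac{x_1^{n+k-1}-x_2^{n+k-1}}{x_1-x_2}+a_1\,\frac{x_1^{n+k}-x_2^{n+k}}{x_1-x_2},$$
where $n$ and $k$ enter only as $n+k$; it is equally transparent in the closed binomial form of Theorem~\ref{th:ank_binom}, in which every binomial coefficient and every exponent is a function of $n+k$ alone.

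First I would make this precise by introducing $f(m)$, defined as the common value of $a_{n,k}$ over all admissible pairs with $n+k=m$; the displayed formula guarantees that $f$ is well defined. Then I would note that a rising diagonal consists of exactly such a family of pairs: the entries of $\{a_{n-k,k}\}_{k=0}^{\lfloor n/2\rfloor}$ all satisfy $(n-k)+k=n$, so their index sum is the constant $n$, independent of $k$. Consequently $a_{n-k,k}=f(n)=a_{n,0}$ for every $k$ in the range, which is precisely the assertion that the rising diagonal sequence is constant.

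The only point requiring a moment's care is the degenerate case $D=0$ (equivalently $x_1=x_2$), where \eqref{eq:Gir_0} in its quotient form is not literally defined. However, this is already handled inside Theorem~\ref{th:ank_binom}: the Girard--Waring identity used there remains valid in the limit $x_1\to x_2$, and the resulting binomial expression still depends on $(n,k)$ only through $n+k$. Thus the ``depends only on $n+k$'' property---and hence the constancy along rising diagonals---persists without exception. I do not expect any genuine obstacle here; the content of the corollary is essentially the translation-invariance of the explicit formula in the variable $n+k$, and the substantive work was already carried out in Theorem~\ref{th:ank_binom}.
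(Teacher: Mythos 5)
Your proof is correct and follows essentially the same route as the paper: the paper's own proof simply applies the formula of Theorem~\ref{th:ank_binom} to $a_{n-k,k}$ and notes that the resulting expression does not depend on $k$, which is exactly your observation that the explicit formula depends on $(n,k)$ only through the sum $n+k$. Your additional remark on the degenerate case $D=0$ is already covered inside the proof of Theorem~\ref{th:ank_binom} via the limiting form of the Girard--Waring identity, so nothing further is needed.
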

\begin{proof} 
	We apply the formula proved in Theorem~\ref{th:ank_binom} to $a_{n-k,k}$ and obtain an expression that does not depend on the index $k$.
\end{proof}

Figure~\ref{fig:Triangle_case1} shows \BT$(a_0,a_1,1,1;1,1)$ as an example for case~1. Using the result of Theorem~\ref{th:ank_binom}, in the case $\alpha=1$ and $\beta=1$, the binomial coefficients are the coefficients of the elements in the rising diagonals.   Thus  $a_{n,k}=a_0 f_{n+k-1} + a_1 f_{n+k}$, where $f_n$ is the $n^{\text{th}}$ Fibonacci number ({A000045}), so that, $f_0=0, f_1=f_{-1}=1$. (We also find from the above relation \eqref{eq:Gir_0} the connection with Fibonacci numbers.) The special Fibonacci binomial interpolated triangle is \BT$(1,1,1,1;1,1)$ ({A199512}). Falcon and Plaza \cite{Falcon} provided an other example in Table~4 for this case, namely \BT$(0,1,3,1;3,1)$, which is generated by the $3$-Fibonacci sequence ({A006190}).

\begin{figure}[!h]
	\centering
	\scalebox{0.99}{ \begin{tikzpicture}[->,xscale=2.5,yscale=0.8, auto,swap]
		\node(a00) at (0,0)    {$a_0$};
		
		\node (a01) at (-0.5,-1)   {$a_1$};
		\node (a02) at (0.5,-1)   {$p+a_1$};
		
		\node (a01) at (-1,-2)   {$a_0+a_1$};
		\node (a02) at (0,-2)   {$a_0+2a_1$};	
		\node (a01) at (1,-2)   {$2a_0+3a_1$};
		
		\node (a02) at (-1.5,-3)   {$a_0+2a_1$};		
		\node (a01) at (-0.5,-3)   {$2a_0+3a_1$};
		\node (a01) at (0.5,-3)   {$3a_0+5a_1$};
		\node (a02) at (1.5,-3)   {$5a_0+8a_1$};	
		
		\node (a01) at (-2,-4)   {$2a_0+3a_1$};
		\node (a01) at (-1,-4)   {$3a_0+5a_1$};
		\node (a01) at (0,-4)   {$5a_0+8a_1$};
		\node (a02) at (1,-4)   {$8a_0+13a_1$};
		\node (a02) at (2,-4)   {$13a_0+21a_1$};
		
		\end{tikzpicture}}
	\caption{\BT$(a_0,a_1,1,1;1,1)$}
	\label{fig:Triangle_case1}
\end{figure}

\subsection{Case 2}

 From the equations system we obtain
\begin{alignat*}{2}
	v_1&=\frac{\alpha-1+\sqrt{(\alpha-1)^2+4\beta}}{2}, \quad  \quad u_1&=\frac{v_1^2}{\beta},\\
	v_2&=\frac{\alpha-1-\sqrt{(\alpha-1)^2+4\beta}}{2}, \quad \quad u_2&=\frac{v_2^2}{\beta}.	
\end{alignat*}

If $(\alpha-1)^2+4\beta=0$, i.e., $\beta=-(1/4)(\alpha-1)^2$, then, replacing the corresponding values for $u$ and $v$, after simplification we have
\begin{equation}\label{tri:case2_0}
\text{\BT}\left(a_0,a_1,\alpha,-\frac{(\alpha-1)^2}{4};-1,\frac{\alpha-1}{2}\right).
\end{equation}

If $(\alpha-1)^2+4\beta\ne0$, then the triangles are 
\begin{eqnarray}\label{tri:case2a}
	& &\text{\BT}(a_0,a_1,\alpha,\beta;u_1,v_1),\label{tri:case2_1}\\
	& &\text{\BT}(a_0,a_1,\alpha,\beta;u_2,v_2).\label{tri:case2_2}
\end{eqnarray}

Finally, we compare the triangles \eqref{tri:case1}--\eqref{tri:case2_2} with the symmetrical ones \eqref{tri:sym}--\eqref{tri:sym3}. In all the cases for the symmetrical binary interpolated binomial triangle with some calculations  we  gain the following corollary.

\begin{corollary} The symmetrical binary interpolated binomial triangles whose   an inner entry  could be the  sum of values not only left but also directly above it by coefficients $u$ and $v$ are
\begin{equation*}\label{tri:sym_case1}
\text{\BT}(a_0,-\frac{a_0}{2},-1,-1;-1,-1),
\end{equation*}
which has only terms $a_0$ and $\pm a_0/2$ and 
\begin{equation*}\label{tri:sym_case2}
\text{\BT}(a_0,a_0,2,-1;2,-1),
\end{equation*}
which has only terms $a_0$.
\end{corollary}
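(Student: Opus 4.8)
The plan is to realize the corollary as the intersection of two already-characterized families: the symmetrical triangles of Theorem~\ref{th:sym} and the ``sum-of-above'' triangles of cases~1 and~2. The decisive lever is that, by the proof of Theorem~\ref{th:sym}, symmetry is equivalent to the three scalar equations $\mathcal{A}=\alpha$, $\mathcal{B}=\beta$ and $(1-u)a_{1,0}=v\,a_{0,0}$. Imposing these alongside the defining equations of case~1 or case~2 turns each comparison into a small polynomial system in $u,v,\alpha,\beta$, which I would simply solve. Note that $uv\neq0$ and, from $\mathcal{A}=\alpha$ (that is, $\alpha(u-1)=-2v$), also $u\neq1$, so all the divisions below are legitimate; since the two coefficients $\{u,v\}$ can be attached to the two upper neighbours in only the two ways drawn in Figure~\ref{fig:sum}, cases~1 and~2 are exhaustive.

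For case~1 the triangle is \eqref{tri:case1}, i.e.\ $u=\alpha$ and $v=\beta$. Substituting into $\mathcal{B}=\beta$ gives $\mathcal{B}=\alpha^2\beta-\alpha^2\beta-\beta^2=-\beta^2$, whence $\beta=-1$; substituting into $\mathcal{A}=\alpha$ gives $\alpha=\alpha^2+2\beta=\alpha^2-2$, i.e.\ $\alpha^2-\alpha-2=0$, so $\alpha\in\{2,-1\}$. The remaining equation $(1-\alpha)a_{1,0}=\beta a_{0,0}=-a_{0,0}$ then fixes the seed: $\alpha=2$ yields $a_{1,0}=a_{0,0}$ and $\alpha=-1$ yields $a_{1,0}=-a_{0,0}/2$. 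These are precisely the two triangles \BT$(a_0,a_0,2,-1;2,-1)$ and \BT$(a_0,-a_0/2,-1,-1;-1,-1)$ of the statement.

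For case~2 I would work directly from the raw system $v^2=u\beta$ and $\mathcal{B}=-uv$, which handles the degenerate branch \eqref{tri:case2_0} and the two branches \eqref{tri:case2_1}--\eqref{tri:case2_2} uniformly (so nothing can be lost in the sub-case split). Combining $\mathcal{B}=-uv$ with the symmetry equation $\mathcal{B}=\beta$ gives $\beta=-uv$, and then $v^2=u\beta$ forces $v=-u^2$ and $\beta=u^3$, while $\mathcal{A}=\alpha$ gives $\alpha=2u^2/(u-1)$. Feeding these back into $\mathcal{B}=\beta$ reduces, after a routine simplification, to $u^3=-1$, hence $u=-1$ and $u=v=\alpha=\beta=-1$. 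This reproduces \BT$(a_0,-a_0/2,-1,-1;-1,-1)$ (which, having $u=v$, is at once a case~1 and a case~2 triangle), so case~2 contributes nothing new.

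Finally I would verify the two ``has only terms'' claims directly from the generation rule \eqref{eq:def_triangle_rule}. For \BT$(a_0,a_0,2,-1;2,-1)$ the left leg is constantly $a_0$, and an induction on $k$ using $a_{n,k}=2a_{n,k-1}-a_{n-1,k-1}$ keeps every entry equal to $a_0$. For \BT$(a_0,-a_0/2,-1,-1;-1,-1)$ the rule reads $a_{n,k}=-a_{n,k-1}-a_{n-1,k-1}$, and since its two parents have antidiagonal indices $n+k-1$ and $n+k-2$, one checks by induction that $a_{n,k}=g(n+k)\,a_{0,0}$, where $g$ satisfies $g(m)=-g(m-1)-g(m-2)$ with $g(0)=1$, $g(1)=-\tfrac12$; the characteristic roots being primitive cube roots of unity, $g$ is $3$-periodic with values $1,-\tfrac12,-\tfrac12$, so every entry is $a_0$ or $-a_0/2$. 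The difficulty here is organizational rather than conceptual: the one point demanding care is the exhaustiveness of the case analysis, and in particular routing the degenerate case~2 branch through the raw system so that no symmetrical sum-of-above triangle is overlooked.
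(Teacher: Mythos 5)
Your proposal is correct and follows essentially the paper's own route: the paper obtains this corollary by intersecting the sum-of-above families of cases 1--2 (triangles \eqref{tri:case1}--\eqref{tri:case2_2}) with the symmetric families of Theorem~\ref{th:sym}, saying only that ``with some calculations'' the two listed triangles emerge, and your solution of the combined systems (case 1 forcing $\beta=-1$, $\alpha\in\{2,-1\}$; case 2 collapsing to $u=v=\alpha=\beta=-1$, hence nothing new) is precisely that calculation written out. Your two refinements---running case 2 through the raw system $v^2=u\beta$, $\mathcal{B}=-uv$ so the degenerate branch cannot be overlooked, and the $3$-periodicity argument establishing the ``only terms'' claims, which the paper asserts without proof---add rigor but do not change the approach.
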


\begin{remark}
When both parameters $a_{0,0}$, $a_{1,0}$ are zero we find the trivial triangle exclusively composed of null entries.\\
The triangle with all entries equal to 1 correspond to  more binomial interpolated triangles, for example, $\text{\BT}(1,1,2,-1;-1,2)$ or $\text{\BT}(1,1,2,-1;-1,2)$.\\
The triangle whose left diagonal and rows are the sequences of natural numbers ({A000027}) is  $\text{\BT}(1,2,2,-1;2,-1)$ ({A094727}).
\end{remark}

\section{Acknowledgement}

The author would like to thank the anonymous referee of Journal of Integer Sequences for carefully reading the manuscript and for his/her useful suggestions and improvements.

\end{document}